\newtheorem{theorem}{Theorem}[section]
\newtheorem*{theorem*}{Theorem}
\newtheorem{lemma}{Lemma}[section]
\newtheorem*{remark*}{Remark}
\newtheorem{theoremletter}{Theorem}
 \newtheorem{corollary}{Corollary}[section]
\newtheorem{proposition}[theorem]{Proposition}
\definecolor{pink}{rgb}{1,.2,.6}
\definecolor{orange}{rgb}{0.7,0.3,0}
\definecolor{blue}{rgb}{.2,.6,.75}
\definecolor{green}{rgb}{.4,.7,.4}
\definecolor{purple}{RGB}{127,0,255}
\newcommand{\ayla}[1]{{\color{purple}{#1}} }
\begin{document}

\numberwithin{equation}{section}

\title{Partitions into prime powers}

\author[Gafni]{A. Gafni}
\address{Department of Mathematics, University of Mississippi, 305 Hume Hall, University MS 38677}
\email{argafni@olemiss.edu}

\keywords{partitions, restricted partition functions, prime powers, exponential sums, Hardy-Littlewood circle method}
\subjclass[2010]{ 11P55, 11P82, 11L07, 11L20}
\thanks{}

\date{\today}

\begin{abstract} For a subset $\mathcal A\subset \mathbb N$, let $p_{\mathcal A}(n )$ denote the restricted partition function which counts partitions of $n$ with all parts lying in $\mathcal A$.  In this paper, we use a variation of the Hardy-Littlewood circle method to provide an asymptotic formula for $p_{\mathcal A}(n )$, where $\mathcal A$ is the set of $k$-th powers of primes (for fixed $k$).  This combines Vaughan's work on partitions into primes with the author's previous result about partitions into $k$-th powers.  This new asymptotic formula is an extension of a pattern indicated by several results about restricted partition functions over the past few years.  Comparing these results side-by-side, we discuss a general strategy by which one could analyze $p_{\mathcal A}(n )$ for a given set $\mathcal A$.

\end{abstract}

\maketitle 

\section{Introduction and background} \label{intro}

A \emph{partition} of a number  $n$ is a non-increasing sequence of positive integers whose sum is equal to $n$.  The number of partitions of $n$ is denoted by the \emph{partition function} $p(n)$.  The asymptotic study of partitions began in 1918 with the seminal result of Hardy and Ramanujan, showing that
\begin{equation}\label{Hardy Ramanujan}
p(n) \sim \frac{1}{4n\sqrt{3}} \exp\left(\pi\sqrt{\frac23}n^{\frac12}\right) ,
\end{equation}
as $n\rightarrow \infty.$  For a subset $\mathcal A\subset \mathbb N$, we let $p_{\mathcal A}(n )$ denote the \emph{restricted partition function} which counts partitions of $n$ with all parts lying in $\mathcal A$.  In this paper, we study partitions into $k$-th powers of primes.  The main result (Theorem \ref{main result full asymp}) is an asymptotic formula for the number of such partitions.  Because of the sparsity and irregularity of powers of primes, the formula is given in terms of quite complicated auxiliary functions.  The result can be simplified to the following asymptotic equivalence:
\begin{theorem}\label{main result pattern form}
Fix $k\in \mathbb N$ and let $\mathbb{P}_k = \{ p^k :  p \text{ prime}\}.$  There exist positive constants $C_1, C_2$, depending only on $k$, such that the number of partitions of $n$ with all parts lying in $\mathbb{P}_k $ satisfies
\begin{equation*}
p_{\mathbb{P}_k}(n) \sim  C_1 n^{-\frac{2k+1}{2k+2}} (\log n)^{-\frac{k}{2k+2}} \exp\left(C_2\frac{n^{\frac{1}{(k+1)}}}{ (\log n)^{\frac{k}{(k+1)}}}(1+o(1))\right),
\end{equation*}
as $n\rightarrow\infty$.
\end{theorem}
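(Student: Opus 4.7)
Since Theorem~\ref{main result pattern form} records only the leading-order behaviour of $p_{\mathbb{P}_k}(n)$, the strategy is to establish the detailed Theorem~\ref{main result full asymp} by the Hardy--Littlewood circle method and then extract the clean asymptotic. Concretely, set
\[
F(z) = \prod_{p} \frac{1}{1 - z^{p^k}} = \sum_{n \ge 0} p_{\mathbb{P}_k}(n)\, z^n,
\]
where $p$ ranges over primes, and apply Cauchy's formula on $|z| = e^{-\alpha_0}$ with $\alpha_0 = \alpha_0(n) > 0$ chosen to be the real saddle point of $e^{\alpha n} F(e^{-\alpha})$:
\[
p_{\mathbb{P}_k}(n) = e^{\alpha_0 n} \int_{-1/2}^{1/2} F(e^{-\alpha_0 + 2\pi i \beta})\, e^{-2\pi i n \beta}\, d\beta.
\]

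The first technical input is the behaviour of $B(s) := \log F(e^{-s})$ as $s \to 0^+$. Expanding the Euler product as $B(s) = \sum_{p}\sum_{m \ge 1} m^{-1} e^{-smp^k}$, an Abel-summation or Mellin-transform argument driven by $\pi(x) \sim x/\log x$ yields
\[
B(s) = \frac{c_k}{s^{1/k}\log(1/s)}\bigl(1 + o(1)\bigr), \qquad s \to 0^+,
\]
for an explicit constant $c_k = \Gamma(1/k)\,\zeta(1+1/k) > 0$. Plugging this into the saddle equation $-B'(\alpha_0) = n$ and iterating the resulting implicit equation for $\alpha_0$ gives $\alpha_0 \asymp (n\log n)^{-k/(k+1)}$, so that $\alpha_0 n \asymp n^{1/(k+1)}(\log n)^{-k/(k+1)}$---precisely the scale in the exponent of the theorem---and the relation $\alpha_0 n + B(\alpha_0) \sim (k+1)\,\alpha_0 n$ identifies the constant $C_2$.

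The remaining step is the standard arc analysis. I would split $[-1/2,1/2]$ into major arcs (short neighbourhoods of reduced fractions $a/q$ with $q \le Q = Q(n)$) and minor arcs. On the major arcs, local approximations of $F$ at each $a/q$---built from Vaughan's singular-series treatment of exponential sums over primes---combined with a Gaussian evaluation near $\beta = 0$ of width $1/\sqrt{B''(\alpha_0)} \asymp \sqrt{\alpha_0/n}$ produce the polynomial prefactor $n^{-(2k+1)/(2k+2)}(\log n)^{-k/(2k+2)}$, pinning down $C_1$. On the minor arcs, one needs bounds on $\sum_p e^{2\pi i \beta p^k}$ sharp enough that the contribution is absorbed into the main term, after which Theorem~\ref{main result pattern form} follows by simply reading off the leading terms.

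The main obstacle will be precisely this minor-arc estimate. The combination of primality with a $k$-th-power exponent is genuinely harder than either ingredient alone: Weyl-type differencing applied to $p^k$ destroys the sieve structure used to enforce ``$p$ prime,'' while Vaughan's identity for exponential sums over primes does not directly see the nonlinear $k$-th power. The plan is to graft Vaughan's identity (as in his partitions-into-primes paper) onto Weyl--Hua-type inequalities for $k$-th powers (as in the author's earlier work on partitions into $k$-th powers) to obtain a usable bound. The quality of this bound governs the admissible threshold $Q(n)$, and the delicate bookkeeping will be to guarantee that the minor-arc contribution, the major-arc boundary terms, and the Gaussian tails are all smaller than the main term.
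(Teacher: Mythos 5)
Your proposal follows essentially the same route as the paper: Cauchy's formula on the saddle-point circle, a Mellin/Abel evaluation of $\Phi(\rho)$ yielding exactly the constant $c_k=\zeta(1+\tfrac1k)\Gamma(\tfrac1k)$, the saddle relation $n=\rho\Phi'(\rho)$ giving $\log(1/\rho)\asymp(n\log n)^{-k/(k+1)}$, the identity $n\log(1/\rho)+\Phi(\rho)\sim(k+1)\,n\log(1/\rho)$ for $C_2$, and a Gaussian evaluation of width $\Phi_{(2)}(\rho)^{-1/2}$ producing the prefactor $n^{-(2k+1)/(2k+2)}(\log n)^{-k/(2k+2)}$; all of these orders of magnitude check against Proposition \ref{growth prop}, and the final deduction of Theorem \ref{main result pattern form} from Theorem \ref{main result full asymp} is exactly the short computation in Section 1.3. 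Two calibration points where your expectations diverge from what is actually needed. First, the obstacle you single out as hardest --- a minor-arc bound for $\sum_{p\le y}e(jp^k\alpha)$ --- does not have to be built from a new hybrid of Vaughan's identity and Weyl differencing: Lemma \ref{minor arcs lemma} simply imports the Kawada--Wooley estimate, and because one only needs $\Re\Phi(\rho e(\alpha))$ to lose a couple of powers of $\log X$ against $\Phi(\rho)$ (the saving is then exponentiated), a rather weak bound suffices. Second, your phrase ``singular-series treatment'' of the major arcs is miscalibrated for restricted partitions: the non-principal major arcs contribute nothing to the main term and no singular series appears in Theorem \ref{main result full asymp}. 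What is required there instead is the strict inequality $|\Re\Phi(\rho e(a/q+\beta))|\le(1-\tfrac{\delta_k}{3})\Phi(\rho)$ for $q>1$, which the paper obtains from the Siegel--Walfisz theorem (this is what caps $Q$ at a power of $\log X$) combined with the uniform bound $|S_k^*(q,a)|\le C_kq^{-1/k}\varphi(q)$ of Proposition \ref{Uniform S bound prop}. With those two adjustments, and with the understanding that your sketch leaves the keyhole-contour analysis around the logarithmic singularity of $\log\zeta$ (Lemma \ref{rho d/drho lemma}) and the third-order Taylor control of the Gaussian peak to be carried out in detail, your plan coincides with the paper's proof.
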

\begin{remark*} This may seem to be an unusual statement, since it is possible that the negative powers of $n$ and $\log n$ may be dwarfed by the error term in the exponential.  We state the result this way in order to illustrate the similarity in form between this asymptotic and the results below.  The error term in the exponential is a result of the fact that the prime number theorem cannot be expressed in terms of elementary functions.  
\end{remark*}

\subsection{A pattern of results}
Theorem \ref{main result pattern form} is an extension of a pattern indicated by several results over the past few years.  
In a previous paper, the author proved the following asymptotic for partitions into $k$-th powers\footnote{This result was originally proven by Wright \cite{Wright1934} using substantially more complicated techniques.  Vaughan \cite{Vaughan2015} used a simpler method, outlined in Section \ref{strategy section}, to give a new proof in the case $k=2$.  The author generalized Vaughan's method to give a new proof of Wright's result for general $k$.}: 
\begin{theoremletter}[\cite{Gafni2016}]
Let  $\mathcal A_k = \{x^k : x \in \mathbb N\}  $.  Then 
$$p_{\mathcal A_k} (n) \sim C_1 \exp\left( C_2 n^{\frac{1}{k+1}})\right)n^{-\frac{3k+1}{2(k+1)}},$$
where $C_1, C_2$ are positive constants depending only on $k$.
\end{theoremletter}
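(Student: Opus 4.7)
The plan is to apply the saddle-point variant of the Hardy--Littlewood circle method to the generating function
\begin{equation*}
F(z) = \sum_{n \geq 0} p_{\mathcal A_k}(n)\, z^n = \prod_{m=1}^\infty \frac{1}{1 - z^{m^k}},
\end{equation*}
so that by Cauchy's theorem
\begin{equation*}
p_{\mathcal A_k}(n) = \frac{1}{2\pi i}\oint_{|z|=e^{-\alpha}} \frac{F(z)}{z^{n+1}}\, dz
\end{equation*}
with $\alpha > 0$ chosen as the saddle point. Parametrizing $z = e^{-\alpha + 2\pi i \beta}$ for $\beta \in [-\tfrac12, \tfrac12]$, the integral splits into a major arc (small $|\beta|$) and minor arcs.

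To locate the saddle and extract the main asymptotic, I would analyze $\log F(e^{-\alpha})$ for small $\alpha > 0$. Expanding
\begin{equation*}
\log F(e^{-\alpha}) = \sum_{m=1}^\infty \sum_{j=1}^\infty \frac{1}{j}\, e^{-\alpha j m^k}
\end{equation*}
and applying the Mellin identity $e^{-x} = \frac{1}{2\pi i}\int_{(c)} \Gamma(s)\, x^{-s}\, ds$ with $c$ sufficiently large rewrites the double sum as
\begin{equation*}
\frac{1}{2\pi i}\int_{(c)} \Gamma(s)\, \zeta(s+1)\, \zeta(ks)\, \alpha^{-s}\, ds.
\end{equation*}
Shifting the contour leftward picks up a simple pole at $s = 1/k$ (from $\zeta(ks)$), contributing the dominant term $\Gamma(1+1/k)\,\zeta(1+1/k)\,\alpha^{-1/k}$, and a double pole at $s = 0$ (from $\Gamma(s)\,\zeta(s+1)$), contributing $\tfrac12 \log \alpha + O(1)$; further poles at negative integers give corrections of size $O(\alpha)$. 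The saddle equation $n = -\partial_\alpha \log F(e^{-\alpha})$ then fixes $\alpha \asymp n^{-k/(k+1)}$ and produces the exponential main term $\exp(C_2\, n^{1/(k+1)})$.

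Next, on the major arc $|\beta| \leq \alpha^{1+\delta}$ for a small $\delta > 0$, I would Taylor-expand $\log F(e^{-\alpha + 2\pi i\beta})$ to second order in $\beta$ about $\beta = 0$. The second derivative $B = \partial_\alpha^2 \log F(e^{-\alpha}) \asymp \alpha^{-1/k-2}$ gives a Gaussian integral of size $1/\sqrt{B} \asymp \alpha^{(2k+1)/(2k)}$; combined with the $\alpha^{1/2}$ prefactor arising from the $\tfrac12 \log \alpha$ term in $\log F(e^{-\alpha})$, this produces the polynomial factor $\alpha^{(3k+1)/(2k)} \asymp n^{-(3k+1)/(2(k+1))}$ appearing in the theorem.

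The main obstacle, as usual in circle-method proofs of this flavor, is the minor arc bound: one must show
\begin{equation*}
\bigl|F(e^{-\alpha + 2\pi i\beta})\bigr| \ll F(e^{-\alpha})\, e^{-\Delta(\alpha,\beta)}
\end{equation*}
with $\Delta(\alpha,\beta)$ large enough to both suppress $F(e^{-\alpha})$ and beat the $\beta$-integration range. Following Vaughan's strategy, I would reduce this to a lower bound on
\begin{equation*}
\operatorname{Re}\bigl(\log F(e^{-\alpha}) - \log F(e^{-\alpha + 2\pi i\beta})\bigr) = \sum_{m,j \geq 1} \frac{1}{j}\bigl(1 - \cos(2\pi j m^k \beta)\bigr)\, e^{-\alpha j m^k},
\end{equation*}
exploiting the distribution of $\{j m^k \beta\}$ modulo $1$ when $\beta$ is not too close to a rational with small denominator. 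Assembling the major-arc main term with a negligible minor-arc contribution then yields the asymptotic, with constants $C_1, C_2$ expressible in terms of $\Gamma$- and $\zeta$-values.
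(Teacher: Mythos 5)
Your proposal follows exactly the framework the paper describes in Section \ref{strategy section} and carries out for $\mathbb{P}_k$ in Sections \ref{setup sec}--\ref{end of proof sec}: Cauchy's formula at a saddle radius, a Mellin-transform analysis of $\log F(e^{-\alpha})$, a Gaussian integral on the principal arc, and an exponential-sum bound off it. Your bookkeeping is right: the simple pole at $s=1/k$ gives $\Gamma(1+\tfrac1k)\zeta(1+\tfrac1k)\alpha^{-1/k}$, the double pole at $s=0$ gives the $\tfrac12\log\alpha$ (hence the $\alpha^{1/2}$ prefactor), the saddle equation forces $\alpha\asymp n^{-k/(k+1)}$, and $B\asymp\alpha^{-1/k-2}$ combined with the prefactor yields $\alpha^{(3k+1)/(2k)}\asymp n^{-(3k+1)/(2(k+1))}$, matching the stated exponent.

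The one genuine gap is your arc decomposition. You split $[-\tfrac12,\tfrac12]$ into a single major arc near $\beta=0$ plus ``minor arcs,'' and your proposed minor-arc bound via equidistribution of $\{jm^k\beta\}$ explicitly requires $\beta$ to be \emph{not} close to a rational with small denominator. That leaves the neighborhoods of $a/q$ with $1<q\le Q$ uncovered: there the Weyl-sum estimate degenerates (indeed $F$ is genuinely large near such points), and no equidistribution argument applies. This is why the paper insists on a three-way split $\mathfrak{M}(1,0)$, $\mathfrak{M}\setminus\mathfrak{M}(1,0)$, $\mathfrak{m}$, and why it needs the second ingredient listed in Section \ref{strategy section}, the distribution of $\mathcal{A}$ in residue classes. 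On a non-principal major arc one writes $\Phi(\rho e(a/q+\beta))$ in terms of complete exponential sums $S_k(q_j,a_j)$ (Gauss-sum type), and the key point is a uniform saving $|\mathrm{Re}(S_k(q,a)(1-2\pi i\beta X)^{-1/k})|\le(1-\delta_k)q$ for $q>1$, so that $\mathrm{Re}\,\Phi$ falls short of $\Phi(\rho)$ by a fixed proportion and the contribution is exponentially negligible (this is the content of Lemmas \ref{major arc estimate lemma}--\ref{major arc real part lemma} here, and of the analogous lemmas in the cited proof for $\mathcal{A}_k$). You need to add this step; without it the claim that everything outside the principal arc is negligible is unjustified.
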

\noindent
Berndt, Malik, and Zaharescu generalized that result to partitions into $k$-th powers in a residue class:
\begin{theoremletter}[Berndt-Malik-Zaharescu, \cite{BeMaZa2018}]
Let $\mathcal A_{k, (a,b)} = \{x^k : x\equiv a\pmod b, x \in \mathbb N\}  $.  Then 
$$p_{\mathcal A_{k, (a,b)}} (n) \sim C_1 \exp\left( C_2 n^{\frac{1}{k+1}}\right)n^{-\frac{b+bk +2ak}{2b(k+1)}}$$
where $C_1, C_2$ are positive constants depending only on $k$, $a$, and $b$.
\end{theoremletter}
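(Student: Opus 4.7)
The plan is to follow the Hardy-Littlewood circle method strategy of Vaughan \cite{Vaughan2015} and the author used to establish Theorem A, and to track how the arithmetic progression condition modifies the analysis. Let
$$F(z) = \prod_{\substack{x \geq 1 \\ x \equiv a \pmod b}} \frac{1}{1 - z^{x^k}},$$
so that $p_{\mathcal{A}_{k,(a,b)}}(n)$ is the $n$-th Taylor coefficient. By Cauchy's formula,
$$p_{\mathcal{A}_{k,(a,b)}}(n) = \frac{1}{2\pi i}\oint_{|z| = e^{-\alpha}} F(z)\, z^{-n-1}\, dz,$$
and I would choose $\alpha = \alpha(n)$ of order $n^{-k/(k+1)}$ so as to sit at the saddle point of the integrand.

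The core of the argument is a Mellin-transform analysis of $\phi(\alpha) := \log F(e^{-\alpha})$. Expanding the logarithm and applying $e^{-u} = \frac{1}{2\pi i}\int_{(c)} \Gamma(s)\, u^{-s}\, ds$ with $c > 1/k$, one obtains
$$\phi(\alpha) = \frac{1}{2\pi i}\int_{(c)} \Gamma(s)\,\zeta(s+1)\,\zeta_{a,b}(ks)\,\alpha^{-s}\, ds,$$
where
$$\zeta_{a,b}(s) = \sum_{\substack{x \geq 1 \\ x \equiv a\pmod b}} x^{-s} = b^{-s}\,\zeta(s, a/b)$$
is a Hurwitz-type zeta with a simple pole at $s=1$ of residue $1/b$ and value $\zeta_{a,b}(0) = 1/2 - a/b$. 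Shifting the contour to the left, the pole of $\zeta_{a,b}(ks)$ at $s=1/k$ yields a main term of order $\alpha^{-1/k}/b$, while the double pole of $\Gamma(s)\zeta(s+1)$ at $s=0$ contributes $-\zeta_{a,b}(0)\log\alpha$ plus a constant. Exponentiating gives
$$F(e^{-\alpha}) \sim c\,\alpha^{a/b - 1/2}\,\exp\!\left(\frac{\Gamma(1/k)\,\zeta(1+1/k)}{k\,b}\,\alpha^{-1/k}\right)$$
for some positive constant $c = c(a,b,k)$.

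Next I would split the integration circle into major arcs about $z = e^{-\alpha}$ and minor arcs elsewhere. On the major arcs, writing $z = e^{-\alpha + i\beta}$ and Taylor-expanding $\phi$ produces a Gaussian integrand; Laplace's method yields
$$p_{\mathcal{A}_{k,(a,b)}}(n) \sim \frac{F(e^{-\alpha})\,e^{\alpha n}}{\sqrt{2\pi\,\phi''(\alpha)}}.$$
Substituting $\alpha \sim n^{-k/(k+1)}$, the factor $\phi''(\alpha)^{-1/2}$ contributes $n^{-(2k+1)/(2(k+1))}$ and the prefactor $\alpha^{a/b - 1/2}$ contributes $n^{-k(2a-b)/(2b(k+1))}$. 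Combining these exponents gives $-(b+bk+2ak)/(2b(k+1))$, matching the stated result.

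The main obstacle will be the minor arc bound: showing that $|F(e^{-\alpha+i\beta})|$ is exponentially smaller than $F(e^{-\alpha})$ uniformly for $\beta$ bounded away from $0$. This requires Weyl-type cancellation in exponential sums of the form $\sum_{x \equiv a \pmod b} e^{(-\alpha+i\beta)x^k}$. After the linear substitution $x = a + bj$, these reduce to standard Weyl sums of degree $k$ in $j$, so the estimates used in Theorem A adapt with constants depending on $a,b$; no new phenomena arise beyond careful tracking of the Dirichlet rational approximations that underlie the classical Weyl inequality.
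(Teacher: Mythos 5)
Your principal-arc analysis is correct and is essentially the strategy this paper outlines in Section \ref{strategy section} (note that the paper does not prove this theorem itself; it only cites \cite{BeMaZa2018}, which indeed proceeds by this circle-method variant, viewing the parts as values of the polynomial $(bj+a)^k$). Your Mellin computation is right: the simple pole of $\zeta_{a,b}(ks)=b^{-ks}\zeta(ks,a/b)$ at $s=1/k$ gives the exponential main term $\frac{\Gamma(1/k)\zeta(1+1/k)}{kb}\alpha^{-1/k}$, the double pole at $s=0$ gives the prefactor $\alpha^{-\zeta_{a,b}(0)}=\alpha^{a/b-1/2}$, and combining $\phi''(\alpha)^{-1/2}\asymp n^{-(2k+1)/(2k+2)}$ with $\alpha^{a/b-1/2}\asymp n^{-k(2a-b)/(2b(k+1))}$ does yield the exponent $-(b+bk+2ak)/(2b(k+1))$.

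The gap is in your treatment of everything away from the principal arc. You propose to handle all $\beta$ bounded away from $0$ by Weyl-type cancellation after the substitution $x=a+bj$. That fails on the non-principal major arcs: when $\theta$ is close to a rational $r/q$ with $q$ small (say $q\le(\log X)^A$), Weyl's inequality gives no saving at all, since $\sum_{j\le N}e\bigl((bj+a)^k r/q\bigr)$ is essentially $N/q$ times a complete exponential sum modulo $q$ and can have size comparable to $N$. There one must argue as in Sections \ref{major arcs sec}--\ref{appendix} of this paper: evaluate $\Phi(\rho e(r/q+\beta))$ via Siegel--Walfisz-free summation in residue classes (here just splitting $j$ mod $q$) and then prove a bound of the shape $|\Re\,\Phi(\rho e(\theta))|\le(1-\delta)\Phi(\rho)$ using nontrivial upper bounds for the complete sums $\sum_{\ell\bmod q}e\bigl(r(b\ell+a)^k/q\bigr)$ (the analogue of Lemma \ref{major arc real part with delta} and Proposition \ref{Uniform S bound prop}). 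This is a separate ingredient from Weyl, and it is also where the implicit hypothesis $\gcd(a,b)=1$ (more precisely, $\gcd$ of the part set equal to $1$) enters: if the parts share a common factor $d>1$, the arc at $1/d$ contributes as much as the principal arc and the stated asymptotic is false for $n$ in the wrong residue class. With that arc decomposition supplied --- principal major arc, non-principal major arcs via complete-sum bounds, genuine minor arcs via Weyl --- your outline becomes a correct proof.
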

\noindent
The sets $\mathcal A_k$ and $\mathcal A_{k, (a,b)}$ can be thought of as integer values of the polynomials $x^k$ and $(bx+a)^k$, respectively.  Dunn and Robles further extended this idea to study partitions into integer values of a polynomial: 
\begin{theoremletter}[Dunn-Robles, \cite{DunnRob2018}]
Let $f$ be a polynomial and let  $\mathcal A_f = \{f(x) : x \in \mathbb N\}$.  If $\mathcal A_f \subset \mathbb N$  and $\gcd(\mathcal A_f) = 1$, then 
$$p_{\mathcal A_f} (n) \sim C_1 \exp\left( C_2 n^{\frac{1}{d+1}}\right)n^{-\frac{2d(1-\zeta(0, \alpha))+1}{2(d+1)}}$$
where $d = \deg(f)$, $\zeta(0, \alpha)$ is a value of an appropriate Matsumoto-Weng $\zeta$ function, and $C_1, C_2$ are positive constants depending only on the polynomial $f$.  
\end{theoremletter}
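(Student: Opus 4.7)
The plan is to follow the Hardy--Littlewood / saddle-point framework outlined in Section \ref{strategy section}, with the generating function
\[
F_f(z) \;=\; \prod_{x\ge 1}\bigl(1 - z^{f(x)}\bigr)^{-1} \;=\; \sum_{n\ge 0} p_{\mathcal A_f}(n)\, z^n,
\]
so that $p_{\mathcal A_f}(n) = \tfrac{1}{2\pi i}\oint F_f(z)\, z^{-n-1}\, dz$ taken over a small circle $|z|=e^{-\alpha}$. The hypothesis $\gcd(\mathcal A_f)=1$ is exactly what is needed to ensure that $z=1$ is the unique dominant singularity on $|z|=1$, so the contour can be split into a major arc about $z=1$ and minor arcs away from it.

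On the major arc I would compute $\log F_f(e^{-s})$ by expanding the logarithm and applying Mellin inversion. Writing
\[
\log F_f(e^{-s}) \;=\; \sum_{m\ge 1}\frac{1}{m}\sum_{x\ge 1} e^{-sm f(x)},
\]
and inserting $e^{-u}=\frac{1}{2\pi i}\int_{(c)} \Gamma(w) u^{-w}\,dw$ with $c$ large, one obtains
\[
\log F_f(e^{-s}) \;=\; \frac{1}{2\pi i}\int_{(c)} \Gamma(w)\,\zeta(w+1)\,\zeta_f(w)\, s^{-w}\, dw,
\]
where $\zeta_f(w) = \sum_{x\ge 1} f(x)^{-w}$ is the Matsumoto--Weng zeta function attached to $f$. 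This Dirichlet series converges for $\operatorname{Re}(w)>1/d$ and extends meromorphically with a simple pole at $w=1/d$ (from the $x^d$-growth of $f$) and regular behavior at $w=0$. Shifting the contour past the poles at $w=1/d$ (contributing the main term $\sim c_1 s^{-1/d}$ in the exponent), at $w=0$ (a double-pole interaction of $\Gamma(w)$ with $\zeta_f(w)\zeta(w+1)$ producing a term $(1 - \zeta_f(0))\log(1/s)$ after using $\zeta(1)$-residue bookkeeping), and at $w=-1$, yields an asymptotic expansion of the shape
\[
\log F_f(e^{-s}) \;=\; A_f\, s^{-1/d} \;+\; \bigl(\zeta_f(0)-1\bigr)\log s \;+\; B_f \;+\; O(s^{\varepsilon}).
\]
This is the precise step that injects the value $\zeta_f(0)=\zeta(0,\alpha)$ into the final exponent of $n$.

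The next step is the saddle-point choice: select $\alpha=\alpha(n)$ so that $\alpha n + \log F_f(e^{-\alpha})$ is stationary, which forces $\alpha \asymp n^{-d/(d+1)}$. Substituting back into the major-arc integral and using the standard Gaussian approximation on a window of radius $\alpha^{1+\delta}$ produces the factors $\exp(C_2 n^{1/(d+1)})$ and $n^{-(2d(1-\zeta_f(0))+1)/(2(d+1))}$ in the claimed form; the contributions of $\log s$ are precisely what produce the exponent $1-\zeta_f(0)$ in the polynomial factor, while the classical $n^{-1/(2(d+1))}$ is the universal saddle-point Jacobian.

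The main obstacle is the minor-arc analysis: one must show that $|F_f(e^{-\alpha+2\pi i\beta})|$ is negligible compared to $|F_f(e^{-\alpha})|$ whenever $|\beta|$ exceeds the major-arc radius. For a general polynomial $f$ of degree $d\ge 2$ this requires Weyl-type or Vinogradov-type estimates on the exponential sums $\sum_{x\le X} e(\beta f(x))$, applied inside the factorization $\log|F_f(e^{-\alpha+2\pi i\beta})| = \sum_m \tfrac{1}{m}\operatorname{Re}\sum_x e^{-m\alpha f(x)}e(m\beta f(x))$. The strategy is to use a Farey dissection tuned to $\alpha$, apply Weyl differencing to reduce to linear exponential sums on $d-1$ levels of differences, and confirm that the resulting savings exceed the real part $A_f\alpha^{-1/d}$. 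The hypothesis $\gcd(\mathcal A_f)=1$ is used here to rule out resonance at roots of unity $e(a/q)$ with small $q$. Handling this uniformly in $f$, and then verifying that the implied constants in the final asymptotic collapse to the $C_1,C_2$ depending only on $f$, is the technical heart of the argument.
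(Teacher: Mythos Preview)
The paper does not actually prove this statement: Theorem~C is quoted from Dunn--Robles \cite{DunnRob2018} as background, alongside Theorems~A, B, and D, to illustrate the pattern that Theorem~\ref{main result pattern form} extends. There is therefore no proof in the paper to compare your proposal against.

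That said, your outline is exactly the template the paper describes in Section~\ref{strategy section} for attacking $p_{\mathcal A}(n)$: Mellin inversion on $\Phi_{\mathcal A}(e^{-s})$ to extract the main term from residues of $\Gamma(w)\zeta(w+1)\zeta_f(w)s^{-w}$, a saddle-point choice of radius, and Weyl-sum bounds on the minor arcs. This is also, in broad strokes, what Dunn and Robles do. One point to be careful about in your residue bookkeeping at $w=0$: $\Gamma(w)\zeta(w+1)$ has a double pole there, so the coefficient of $\log s$ in the expansion is $-\zeta_f(0)$ rather than $\zeta_f(0)-1$; the extra ``$-1$'' in the final exponent of $n$ does not come from this residue but from the saddle-point Jacobian $(\Phi_{(2)})^{-1/2}$, which contributes an additional $s^{1+1/(2d)}$-type factor once you translate $s\asymp n^{-d/(d+1)}$ back to powers of $n$. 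Keeping those two sources of the polynomial exponent separate will make the final constant-matching cleaner.
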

We can also consider $p_{\mathcal A}(n)$ for sets that are not induced by polynomials.  In 2008, Vaughan proved the following result about partitions into primes:
\begin{theoremletter}[Vaughan, \cite{Vaughan2008}]
Let $\mathcal A = \mathbb P$ be the set of primes.  Then
$$p_{\mathbb P} (n) \sim C_1 \exp\left( C_2 \frac{n^{\frac12}}{(\log n)^{\frac12}}(1+o(1))\right)n^{-\frac{3}{4}}(\log n)^{-\frac14}$$
where $C_1, C_2$ are positive constants.
\end{theoremletter}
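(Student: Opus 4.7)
The plan is to apply the Hardy--Littlewood circle method in the saddle-point form used by Vaughan in \cite{Vaughan2015} to the Euler-type generating function
$$F(z) \;=\; \prod_{p} (1-z^{p})^{-1} \;=\; \sum_{n \geq 0} p_{\mathbb P}(n)\, z^{n},$$
so that by Cauchy's theorem, for any $\alpha>0$,
$$p_{\mathbb P}(n) \;=\; e^{n\alpha} \int_{-1/2}^{1/2} F(e^{-\alpha + 2\pi i \beta})\, e^{-2\pi i n \beta}\, d\beta,$$
and $\alpha$ will be chosen to be the saddle point of the integrand on the real axis.

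First I would analyze $\log F(e^{-\alpha})$ as $\alpha \to 0^{+}$. Expanding the logarithm gives $\log F(e^{-\alpha}) = \sum_{m \geq 1} m^{-1} \sum_{p} e^{-pm\alpha}$, and the $m=1$ term dominates. Using Mellin inversion together with the prime number theorem (with a Vinogradov--Korobov style error term, to absorb contributions of zeros of $\zeta$), one obtains
$$\sum_{p} e^{-p\alpha} \;\sim\; \frac{1}{\alpha \log(1/\alpha)},$$
with lower-order contributions from $m \geq 2$. Differentiating and imposing the saddle condition $n = -\tfrac{d}{d\alpha}\log F(e^{-\alpha})$ then determines $\alpha \asymp (\log n / n)^{1/2}$, and substituting back produces the exponential shape $\exp(C_{2}\, n^{1/2}(\log n)^{-1/2}(1+o(1)))$.

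Next I would dissect $[-\tfrac12,\tfrac12]$ into major arcs $\mathfrak M$, consisting of short intervals of length $\asymp \alpha$ about rationals $a/q$ with $q \leq Q$ for some small power $Q$ of $\alpha^{-1}$, and minor arcs $\mathfrak m = [-\tfrac12,\tfrac12] \setminus \mathfrak M$. On the major arc about $a/q$, I would factor $F(e^{-\alpha+2\pi i \beta})$ into an arithmetic factor that averages $e(ap/q)$ over primes (controlled via Siegel--Walfisz) times an analytic factor centred at $\beta=0$. The term $q=1$ gives the main contribution: Gaussian integration around $\beta=0$, with Gaussian width governed by the second derivative of $\log F(e^{-\alpha})$ (which is of order $\alpha^{-3}(\log(1/\alpha))^{-1}$), produces the polynomial factor $n^{-3/4}(\log n)^{-1/4}$ from the normalisation constant $(2\pi F'')^{-1/2}$. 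Contributions from $q \geq 2$ are smaller because averaging $e(ap/q)$ over primes is controlled by the density of primes in arithmetic progressions modulo~$q$.

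The main obstacle will be the minor arc estimate: one needs a bound of the form $|F(e^{-\alpha+2\pi i \beta})| \ll F(e^{-\alpha})\cdot\Delta$ with a saving $\Delta$ that more than compensates for the length of $\mathfrak m$. The natural route is Vaughan's identity applied to $\sum_{p} e^{-p\alpha} e(p\beta)$, transferring the classical exponential-sum bound $|\sum_{p \leq X} e(p\beta)| \ll X (\log X)^{A} (q^{-1/2} + X^{-1/2} + (q/X)^{1/2})$ through partial summation into a bound uniform in the damping parameter $\alpha$; this uniformity, and extracting enough savings in the resulting type-I and type-II sums, is the technical heart. A secondary delicate point is careful tracking of prime number theorem error terms so that the $(\log n)^{-1/4}$ factor emerges with its correct constant rather than being swallowed by the $(1+o(1))$ in the exponential.
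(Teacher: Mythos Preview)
Your outline is essentially the approach the paper uses (specialised to $k=1$, which is precisely Vaughan's argument in \cite{Vaughan2008}): write $\Psi=\exp\Phi$, analyse $\Phi(\rho)$ and its derivatives by Mellin inversion through the logarithmic singularity of $\log\zeta$, choose $\rho$ by the saddle condition $n=\rho\Phi'(\rho)$, split into principal arc / non-principal major arcs / minor arcs, use Siegel--Walfisz on the major arcs, Vinogradov-type exponential sum bounds on the minor arcs, and finish with a Gaussian integral on the principal arc.

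There is one genuine inconsistency to flag. You take $Q$ to be ``some small power of $\alpha^{-1}$'', i.e.\ a power of $X$, and then invoke Siegel--Walfisz on the major arcs. These are incompatible: Siegel--Walfisz only gives a uniform error for $q\le (\log X)^{A}$. In the paper (and in \cite{Vaughan2008}) one takes $Q=(\log X)^{A}$ and major-arc width $q^{-1}X^{-1}(\log X)^{A}$; the point is that the Vinogradov bound $\sum_{p\le x}e(p\beta)\ll x(\log x)^{4}\bigl(q^{-1/2}+x^{-1/2}+(q/x)^{1/2}\bigr)$ already gives a saving once $q>(\log X)^{A}$, so the minor arcs can absorb everything with $q$ merely exceeding a power of $\log X$. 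With $Q$ a power of $X$ you would need nontrivial input on primes in progressions to moduli beyond Siegel--Walfisz range, which is not available unconditionally. Adjust $Q$ to a power of $\log(1/\alpha)$ and your outline goes through exactly as in the paper.
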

\noindent

\begin{remark*} Different constants $C_1, C_2$ are used in each of the results above.  The original papers state more explicit versions of the asymptotic formulae, and it is possible to compute the constants from those theorems.  We omit the explicit expressions for $C_1, C_2$ here because they are quite complicated and do not provide significant insight to this discussion.
\end{remark*}

Comparing the above results we see that the dominant term of $\log p_\mathcal{A}(n)$ is given only in terms of the growth of $\mathcal A$.  Indeed, for partitions into polynomial values we have
$$\log p_{\mathcal{A}_f}(n) \sim C_2 n^{\frac{1}{d+1}} = C_2 (n^{\frac{1}{d}})^{\frac{d}{d+1}},$$
and for partitions into powers of primes we have
$$\log p_{\mathbb{P}_k}(n) \sim C_2 \frac{n^{\frac{1}{d+1}} }{(\log n)^{\frac{d}{d+1}}} = C \left(\frac{n^{\frac1d}}{\frac{1}{d}\log n}\right)^{\frac{d}{d+1}}.$$
All of these results are proved using the same variation of the Hardy-Littlewood circle method.  In Section \ref{strategy section}, we outline the common technique and discuss what information is needed to prove an analogous result for a given set $\mathcal A$.  

It should be noted that the results above are not the first formulas for restricted partition functions.  Throughout the 20th Century, restricted partitions were studied extensively by a number of mathematicians, including Wright \cite{Wright1934}, Roth and Szekeres \cite{RotSze1954}, and Bateman and Erd\H{o}s \cite{BatErd1956Mono}.  The generality and strength of these results vary, as do the methods employed.  The results stated above are highlighted because they are proved using a parallel framework and they exhibit the possibility of a pattern for the analysis of other restricted partition functions.   

In recent work, Debruyne and Tenenbaum \cite{DebTen2020} use the saddle-point method to study the asymptotics of  restricted partitions in a general setting.  The saddle-point method can be viewed as a crude version of the circle method, in which the only major arc is the principal major arc at the origin.  Debruyne and Tenenbaum give an asymptotic estimate of $p_{\mathcal A}(n)$ for $\mathcal A$ in a certain class of sets, which includes the sets $\mathcal A_k$, $\mathcal A_{k,(a,b)}$, and $\mathcal A_f$.   The set $\mathbb{P}_k$ of $k$-th powers of primes is not included in their class of sets.  Indeed, one of their requirements is that the Dirichlet series $\sum_{a\in\mathcal A} a^{-s}$ can be meromorphically continued to the half-plane $\Re(s) \ge -\varepsilon$.  In the case of $\mathcal A  = \mathbb{P}_k$, that Dirichlet series is $\mathcal P(ks) = \sum_p p^{-ks}$, which has logarithmic singularities at every zero of the Riemann $\zeta$-function.  The general method for studying restricted partitions, outlined in Section \ref{strategy section} below, is less rigid than the method employed in  \cite{DebTen2020} and is successful for a broader class of sets.

\subsection{The full asymptotic formula}\label{setup subsec}
The generating function for partitions into $k$-th powers of primes is 
\begin{equation*}
\Psi(z) = \sum_{n\ge 0}p_{\mathbb P_k}(n) z^n = \prod_{p \text{ prime}} \left(1-z^{p^k}\right)^{-1}.
\end{equation*}
It will be more convenient to write this as
\begin{equation*}
\Psi(z) = \exp(\Phi(z)).
\end{equation*}
where
\begin{equation*}
\Phi(z) = \sum_{j=1}^\infty \sum_{p \text{ prime}} \frac{1}{j} z^{jp^k}.
\end{equation*}
By Cauchy's integral formula, we have
\begin{equation}\label{Cauchy integral}
p_{\mathbb P_k}(n)  = \rho^{-n} \int_{-1/2}^{1/2} \Psi(\rho e(\alpha)) e(-n\alpha) \, d\alpha = \rho^{-n} \int_{-1/2}^{1/2} \exp(\Phi(\rho e(\alpha)) )e(-n\alpha) \, d\alpha,
\end{equation}
for any positive real number $\rho<1$. Let $x\in \mathbb R$ be large.  (Eventually we will set $x=n$.)  We choose $\rho = \rho(x)$ so that 
\begin{equation*}x = \rho \Phi'(\rho).\end{equation*} 
It will follow from Lemma \ref{rho d/drho lemma} that the relationship between $x$ and $\rho$ is well-defined and injective, and that $\rho\rightarrow 1^-$ as $x\rightarrow\infty$.

In order to interpret the result of Theorem \ref{main result full asymp} and see that it implies Theorem \ref{main result pattern form}, we will need estimates for the auxiliary functions involved.  

\begin{proposition}\label{growth prop}
As $x\rightarrow \infty$, we have
\begin{equation}\label{x log(1/rho)}
x\log\frac{1}{\rho(x)} = \left(\frac{\frac{k+1}{k}\zeta(\frac{k+1}{k})\Gamma(\frac{k+1}{k}) x^{\frac1k}}{\log x}\right)^{\frac{k}{k+1}}\left(1-\frac{k}{k+1}\frac{\log\log x}{\log x} + O\left(\frac{1}{\log x}\right) \right),
\end{equation}
\begin{equation}\label{phi rho}
\Phi(\rho(x)) = k \left(\frac{\frac{k+1}{k}\zeta(\frac{k+1}{k})\Gamma(\frac{k+1}{k}) x^{\frac1k}}{\log x}\right)^{\frac{k}{k+1}} \left(1-\frac{k}{k+1}\frac{\log\log x}{\log x} + O\left(\frac{1}{\log x}\right) \right),
\end{equation}
and, for $m\ge 1$,
\begin{equation}\label{Phi_m estimate}
\Phi_{(m)}(\rho(x)) = x^{\frac{mk+1}{k+1}}\left(\frac{\log x}{\frac{k+1}{k}\zeta(\frac{k+1}{k})\Gamma(\frac{k+1}{k})}\right)^{\frac{k(m-1)}{k+1}}\frac{\Gamma\left(m+\frac1k\right)}{\Gamma(1 + \frac1k)} \left(1+O\left(\frac{\log\log x}{\log x}\right)\right),
\end{equation}
where 
\begin{equation*} 
\Phi_{(m)}(\rho) = \left(\rho \frac{d}{d\rho}\right)^m \Phi(\rho).
\end{equation*}
\end{proposition}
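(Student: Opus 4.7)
The plan is to first compute the asymptotics of $\Phi(\rho)$ and $\Phi_{(m)}(\rho)$ directly from the series, and then invert the saddle-point relation $x=\Phi_{(1)}(\rho)$ to express $r:=\log(1/\rho)$ in terms of $x$. Expanding the geometric series gives
\begin{equation*}
\Phi(\rho) = \sum_{j=1}^\infty \frac{1}{j}\sum_p e^{-jrp^k}, \qquad \Phi_{(m)}(\rho) = \sum_{j=1}^\infty j^{m-1}\sum_p p^{km}e^{-jrp^k} \quad (m\ge 1),
\end{equation*}
so the problem reduces to estimating the inner prime sums uniformly in $j$ and then summing over $j$.

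To estimate $\sum_p p^{km}e^{-jrp^k}$ uniformly in $(j,r)$ while $jr$ is small, I would apply partial summation with the prime number theorem in the form $\pi(t) = \mathrm{Li}(t) + O(t e^{-c\sqrt{\log t}})$, replacing the sum by $\int_2^\infty t^{km}e^{-jrt^k}\,dt/\log t$ up to an error that is exponentially small in $\sqrt{\log(1/(jr))}$. The substitution $u = jrt^k$ turns the main integral into
\begin{equation*}
\frac{1}{(jr)^{m+1/k}}\int_{jr\cdot 2^k}^\infty \frac{u^{m+1/k-1}e^{-u}}{\log(1/(jr)) + \tfrac{1}{k}\log u}\,du,
\end{equation*}
and Taylor-expanding the denominator in the small parameter $\log u/\log(1/(jr))$ yields
\begin{equation*}
\sum_p p^{km} e^{-jrp^k} = \frac{\Gamma(m+1/k)}{(jr)^{m+1/k}\log(1/(jr))}\Bigl(1 + O\!\bigl(\tfrac{1}{\log(1/(jr))}\bigr)\Bigr),
\end{equation*}
with the analogous statement when $m=0$.

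Summing over $j$ after splitting at $j\le(1/r)^{1/2}$ (so that $\log(1/(jr))\asymp\log(1/r)$; the tail $j>(1/r)^{1/2}$ is negligible via the trivial bound $\sum_p e^{-jrp^k}\ll e^{-jr\cdot 2^k}$), and expanding $1/\log(1/(jr))$ in $\log j/\log(1/r)$, gives
\begin{equation*}
\Phi_{(m)}(\rho) = \frac{\Gamma(m+1/k)\zeta(1+1/k)}{r^{m+1/k}\log(1/r)}\bigl(1+O(1/\log(1/r))\bigr), \qquad m\ge 1,
\end{equation*}
and analogously for $\Phi(\rho)$, using $\Gamma(1/k)=k\Gamma(1+1/k)$. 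With $A=\zeta(\tfrac{k+1}{k})\Gamma(\tfrac{k+1}{k})$ and $B=\tfrac{k+1}{k}A$, the defining equation $x=\Phi_{(1)}(\rho)$ reads $r^{(k+1)/k}\log(1/r)=A/x\cdot(1+o(1))$; taking logarithms and iterating gives $\log(1/r) = \tfrac{k}{k+1}\log x + \tfrac{k}{k+1}\log\log x + O(1)$. Feeding this back into $xr = A^{k/(k+1)}x^{1/(k+1)}/[\log(1/r)]^{k/(k+1)}$ yields \eqref{x log(1/rho)}; the identity $\Phi(\rho)\sim k\cdot xr$ gives \eqref{phi rho}; and substituting the expansion of $r$ in terms of $x$ (via $r^{-(m+1/k)}=r^{-(m-1)}\cdot r^{-(1+1/k)}$) into the asymptotic for $\Phi_{(m)}$ gives \eqref{Phi_m estimate}.

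The main technical obstacle is the bookkeeping of error terms uniformly in $j$ and $r$: the PNT error must remain summable against the outer $\sum_j$, the expansions of $1/\log(1/(jr))$ in $\log u$ and $\log j$ must be carried through the right order, and the iterative inversion of the saddle-point equation must be done with enough precision to isolate the $\log\log x/\log x$ correction and to justify the stated error $O(1/\log x)$ in \eqref{x log(1/rho)} and \eqref{phi rho}, as opposed to the weaker $O(\log\log x/\log x)$ appearing in \eqref{Phi_m estimate}.
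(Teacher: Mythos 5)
Your proposal is correct and reaches the stated asymptotics, but it proves the key intermediate estimate by a genuinely different route from the paper. The paper first isolates Lemma \ref{rho d/drho lemma}, giving $\Phi_{(m)}(\rho)=\zeta(1+\tfrac1k)\Gamma(m+\tfrac1k)X^{m+1/k}(\log X)^{-1}(1+O(1/\log X))$ with $X=1/\log(1/\rho)$, by a Mellin transform: it writes $\mathcal P(k(s-m))=\log\zeta(k(s-m))-\mathcal D(k(s-m))$, truncates at height $T=\exp(\sqrt{\log X})$ using the zero-free region, and extracts the main term from a keyhole contour around the logarithmic branch point at $s=m+\tfrac1k$ (the two edges of the cut differ by $2\pi i$, producing the $1/\log X$). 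You instead obtain the same estimate by Abel summation against $\pi(t)=\mathrm{Li}(t)+O(te^{-c\sqrt{\log t}})$, a Laplace-type substitution $u=jrt^k$, and expansion of $1/\log t$; this is essentially the real-variable technique the paper itself deploys later on the major arcs (Lemma \ref{major arc estimate lemma}, at $\beta=0$ and $q=1$), so the two methods encode the same arithmetic input (the classical PNT error term versus the zero-free region) and yield the same relative error $O(1/\log X)$. The subsequent inversion of $x=\rho\Phi'(\rho)$ --- taking logarithms, iterating to get $\log X=\tfrac{k}{k+1}\log x+\tfrac{k}{k+1}\log\log x+O(1)$, and substituting back via $\Phi_{(m)}(\rho)=xX^{m-1}\Gamma(m+\tfrac1k)/\Gamma(1+\tfrac1k)\cdot(1+O(1/\log x))$ --- is identical to the paper's. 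Two small slips to fix in the write-up: after the substitution the denominator should be $\log(1/(jr))+\log u$ (the factor $\tfrac1k$ from $dt$ cancels against $\log t=\tfrac1k\log(u/(jr))$, it does not attach to $\log u$ alone), and the tail $j>(1/r)^{1/2}$ cannot be dismissed by ``$\sum_p e^{-jrp^k}\ll e^{-jr\cdot2^k}$'' when $jr<1$; use instead $\sum_p p^{km}e^{-jrp^k}\ll(jr)^{-m-1/k}$, whose sum over $j>J$ is $\ll X^{m+1/k}J^{-1/k}$, which suffices. Neither affects the argument's validity.
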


We are now ready to state the asymptotic formula.

\begin{theorem}\label{main result full asymp} Using the notation defined above with $\rho = \rho(n)$, we have
\begin{equation*}
p_{\mathbb P_k} (n) = \frac{\rho^{-n} \Psi(\rho)}{\sqrt{2\pi \Phi_{(2)}(\rho)}} \left(1+ O(n^{-\frac{1}{2k+3}})\right).
\end{equation*}
\end{theorem}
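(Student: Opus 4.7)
The plan is to evaluate the Cauchy integral (\ref{Cauchy integral}) by the Hardy--Littlewood circle method. Because $\rho(n)$ is chosen to satisfy $\Phi_{(1)}(\rho)=n$, the integrand has a saddle point at $\alpha=0$, so I dissect $[-1/2,1/2]$ into a principal major arc $\mathfrak M_0$ around $0$, secondary major arcs $\mathfrak M_{a/q}$ around rationals $a/q$ with $2\le q\le Q$, and the remaining minor arcs $\mathfrak m$. The length of the arcs will be slightly larger than $\Phi_{(2)}(\rho)^{-1/2}$, which by (\ref{Phi_m estimate}) is of order $n^{-\frac{k+1}{2(k+1)}}(\log n)^{O(1)}$; the choice is governed by the exponent $\frac{1}{2k+3}$ one wishes to save. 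The goal is to show that $\mathfrak M_0$ produces the claimed main term and that everything else contributes an acceptable error.

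On $\mathfrak M_0$, I Taylor-expand about $\alpha=0$, using $\left(\frac{1}{2\pi i}\frac{d}{d\alpha}\right)^{m}\Phi(\rho e(\alpha))\big|_{\alpha=0}=\Phi_{(m)}(\rho)$, to obtain
\begin{equation*}
\Phi(\rho e(\alpha))=\Phi(\rho)+2\pi i\,n\alpha-2\pi^{2}\alpha^{2}\Phi_{(2)}(\rho)+O\bigl(|\alpha|^{3}\Phi_{(3)}(\rho)\bigr).
\end{equation*}
The saddle point condition cancels the linear term against $e(-n\alpha)$. Using the estimate of $\Phi_{(3)}/\Phi_{(2)}^{3/2}$ from Proposition \ref{growth prop}, the cubic remainder is $o(1)$ uniformly on $\mathfrak M_0$, so the exponential of the error factors as $1+O(\cdot)$. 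Extending the resulting Gaussian integral from $\mathfrak M_0$ to all of $\mathbb R$ costs only an exponentially small error and yields
\begin{equation*}
\int_{\mathfrak M_0}\exp(\Phi(\rho e(\alpha)))e(-n\alpha)\,d\alpha=\frac{\Psi(\rho)}{\sqrt{2\pi\Phi_{(2)}(\rho)}}\bigl(1+O(n^{-\frac{1}{2k+3}})\bigr).
\end{equation*}

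The main obstacle is to show that the secondary major arcs and the minor arcs contribute $O\!\bigl(\Psi(\rho)\Phi_{(2)}(\rho)^{-1/2}n^{-\frac{1}{2k+3}}\bigr)$. The key reduction is to prove a pointwise bound of the shape $\Re\Phi(\rho e(\alpha))\le \Phi(\rho)-c\,n^{\frac{1}{k+1}}(\log n)^{-\frac{k}{k+1}-\eta}$ for $\alpha\in\mathfrak m$, and an analogous but weaker savings (with a $q$-dependent factor) on the $\mathfrak M_{a/q}$. Writing $\Phi(\rho e(\alpha))-\Phi(\rho)$ as a sum over $j\ge 1$ of $\frac{1}{j}\sum_{p}\rho^{jp^{k}}(e(jp^{k}\alpha)-1)$, the $j=1$ term dominates, and for this I intend to combine the two ingredients highlighted in the abstract: Vaughan's identity to decompose the prime-indexed exponential sum into bilinear (Type I/II) pieces, followed by Weyl differencing of length $k$ on the inner variable to exploit the $k$-th power structure, exactly as in the treatment of $\sum_{p\le P}e(\alpha p^{k})$ in Vaughan's book. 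The Farey-style argument then converts these estimates into the required savings on both $\mathfrak m$ and $\mathfrak M_{a/q}$, with the optimal balance between the cubic error on $\mathfrak M_0$ and the minor arc savings producing the stated exponent $\tfrac{1}{2k+3}$.
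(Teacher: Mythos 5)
Your overall architecture — saddle point at the origin, Taylor expansion $\Phi(\rho e(\alpha))=\Phi(\rho)+2\pi i n\alpha-2\pi^2\alpha^2\Phi_{(2)}(\rho)+O(|\alpha|^3\Phi_{(3)}(\rho))$, Gaussian integral, and the exponent count giving $n^{-\frac{1}{2k+3}}$ from the cubic remainder — matches the paper, and your minor-arc plan (bilinear decomposition of $\sum_{p\le y}e(j\alpha p^k)$ via Vaughan's identity and Weyl differencing) is in the same spirit as the paper's appeal to the Kawada--Wooley estimate. The genuine gap is your treatment of the non-principal major arcs $\mathfrak M_{a/q}$ with $q\ge 2$ bounded. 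There you propose ``an analogous but weaker savings (with a $q$-dependent factor)'' obtained by the same Weyl-sum machinery, but that machinery gives nothing when $\alpha$ is within $O(1/(qX))$ of a rational $a/q$ with $q$ small: the sum $\sum_p \rho^{jp^k}e(jp^k a/q)$ then has a genuine main term of size $\asymp \frac{S_k^*(q_j,a_j)}{\varphi(q_j)}\cdot\frac{(X/j)^{1/k}}{\log X}$, where $S_k^*(q,a)=\sum_{(\ell,q)=1}e(a\ell^k/q)$, and there is no equidistribution to exploit. Extracting and bounding this main term requires the Siegel--Walfisz theorem (primes in progressions mod $q_j$) together with an arithmetic analysis of the complete sums $S_k^*(q,a)$: multiplicativity, vanishing for high prime powers, and a uniform bound $|S_k^*(q,a)|\le C_k q^{-1/k}\varphi(q)$. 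None of this appears in your plan.

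Worse, a bound on $|S_k^*(q,a)|$ alone does not suffice: for $q=2$ one has $|S_k^*(2,1)|=\varphi(2)$, so the individual terms exhibit no cancellation at all, and the required saving $\Re\Phi(\rho e(\alpha))\le(1-\delta)\Phi(\rho)$ must instead come from sign cancellation in the sum over $j$ (the paper's alternating-sum argument for $q=2$) and, on the rest of the arc, from the rotation and contraction supplied by the factor $(1-2\pi i\beta X)^{-1/k}$. This is the step the paper spends two lemmas and an appendix on, and it is the one place where ``distribution in residue classes'' — the second of the three ingredients the paper isolates — enters irreplaceably. Your proposal as written would stall precisely here: the Farey-style argument converts Weyl-sum bounds into minor-arc savings, but it cannot show that $\Re\Phi(\rho e(a/q+\beta))$ falls short of $\Phi(\rho)$ by a constant factor for fixed small $q$.
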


\subsection{Proof of Theorem \ref{main result pattern form} given Theorem \ref{main result full asymp}}
By Proposition \ref{growth prop}, we have 
\begin{align*} 
\rho^{-n} \Psi(\rho)&  = \exp\left(n\log\frac{1}{\rho(n)} + \Phi(\rho(n))\right) \\ &
 = \exp \left((k+1)\left(\zeta\left(1+\frac1k\right)\Gamma\left(2+\frac1k\right)\right)^{\frac{k}{k+1}} \frac{n^{\frac{1}{(k+1)}}}{ (\log n)^{\frac{k}{(k+1)}}}(1+o(1)) \right), 
\end{align*}
and 
\begin{equation*}
\sqrt{\Phi_{(2)}(\rho(n))} = n^{\frac{2k+1}{2k+2}}\left(\frac{\log n}{\zeta\left(1+\frac1k\right)\Gamma\left(2+\frac1k\right)}\right)^{\frac{k}{2k+2}}\left(1 + \frac1k\right)^{\frac12} (1+o(1)).
\end{equation*}
Therefore, Theorem \ref{main result full asymp} implies that
\begin{align*}
p_{\mathbb P_k} (n)& = \frac{\rho^{-n} \Psi(\rho)}{\sqrt{2\pi \Phi_{(2)}(\rho)}} \left(1+ O(n^{-\frac{1}{2k+3}})\right) \\&
\sim  C_1 n^{-\frac{2k+1}{2k+2}} (\log n)^{-\frac{k}{2k+2}} \exp\left(C_2\frac{n^{\frac{1}{(k+1)}}}{ (\log n)^{\frac{k}{(k+1)}}}(1+o(1))\right)
\end{align*}
where 
\begin{align*} C_1 & = \sqrt{2\pi}\left(1 + \frac1k\right)^{\frac12}\left({\zeta\left(1+\frac1k\right)\Gamma\left(2+\frac1k\right)}\right)^{\frac{-k}{2k+2}}, \\ 
C_2 &= (k+1)\left(\zeta\left(1+\frac1k\right)\Gamma\left(2+\frac1k\right)\right)^{\frac{k}{k+1}} .
\end{align*}

\subsection{The difference function}
 The methods used to prove the asymptotic formula in Theorem \ref{main result full asymp} can also be used to estimate the growth of $p_{\mathbb{P}_k}(n)$.  This yields the following:

\begin{theorem}\label{difference theorem} Using the notation defined above with $\rho = \rho(n)$, we have
\begin{equation*}
p_{\mathbb{P}_k}(n+1) - p_{\mathbb{P}_k}(n) \sim \frac{\rho^{-n} \log(\frac{1}{\rho})\Psi(\rho)}{\sqrt{2\pi \Phi_{(2)}(\rho)}} \left(1+ O(n^{-\frac{1}{2k+3}})\right).
\end{equation*}
\end{theorem}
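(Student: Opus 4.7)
The plan is to mimic the proof of Theorem \ref{main result full asymp}, applied to a circle-method integrand modified by one extra factor. Applying Cauchy's formula \eqref{Cauchy integral} at radius $\rho = \rho(n)$ to both $p_{\mathbb{P}_k}(n+1)$ and $p_{\mathbb{P}_k}(n)$ and subtracting yields
\begin{equation*}
p_{\mathbb{P}_k}(n+1) - p_{\mathbb{P}_k}(n) = \rho^{-n}\int_{-1/2}^{1/2}\bigl(\rho^{-1}e(-\alpha)-1\bigr)\Psi(\rho e(\alpha))e(-n\alpha)\,d\alpha.
\end{equation*}
The only new feature compared with \eqref{Cauchy integral} is the factor $F(\alpha) := \rho^{-1}e(-\alpha)-1$, whose Taylor expansion at $\alpha=0$ starts with the constant $\log(1/\rho)$ (up to lower-order corrections); the task is to show that this constant is exactly what controls the full integral.

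To that end, I decompose $F(\alpha) = \log(1/\rho) + R(\alpha)$ and split:
\begin{equation*}
p_{\mathbb{P}_k}(n+1) - p_{\mathbb{P}_k}(n) = \log\tfrac{1}{\rho}\cdot p_{\mathbb{P}_k}(n) + \rho^{-n}\int_{-1/2}^{1/2}R(\alpha)\Psi(\rho e(\alpha))e(-n\alpha)\,d\alpha.
\end{equation*}
Theorem \ref{main result full asymp} immediately converts the first summand into the asserted right-hand side with the same $(1+O(n^{-1/(2k+3)}))$ error factor, so the proof reduces to bounding the $R$-integral by an $O(n^{-1/(2k+3)})$ fraction of the main term.

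For the $R$-integral I re-use the major/minor arc dissection $[-1/2,1/2] = \mathfrak{M}\cup\mathfrak{m}$ employed in the proof of Theorem \ref{main result full asymp}. Because $|R(\alpha)| = O(1)$ uniformly, the minor arc contribution is at most a constant times the minor arc bound already established for Theorem \ref{main result full asymp} and so is within the claimed error. On $\mathfrak{M}$, Taylor expanding at $\alpha=0$ gives
\begin{equation*}
R(\alpha) = \bigl(\rho^{-1}-1-\log\tfrac{1}{\rho}\bigr) - \tfrac{2\pi i}{\rho}\alpha + O(\alpha^2),
\end{equation*}
in which the $\alpha$-independent piece is $O(\log^2(1/\rho))$. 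Its contribution to the full integral is $O(\log^2(1/\rho))\cdot p_{\mathbb{P}_k}(n)$, i.e.\ $O(\log(1/\rho))$ times the main term; by Proposition \ref{growth prop}, $\log(1/\rho) = O(n^{-k/(k+1)}(\log n)^{-k/(k+1)})$, comfortably inside $O(n^{-1/(2k+3)})$.

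The heart of the argument, and the step I expect to be the main obstacle, is controlling the linear piece $-(2\pi i/\rho)\int_{\mathfrak{M}}\alpha\,\Psi(\rho e(\alpha))e(-n\alpha)\,d\alpha$. The saddle-point expansion already carried out for Theorem \ref{main result full asymp} yields, using $\Phi_{(1)}(\rho)=n$ from the definition of $\rho$,
\begin{equation*}
\Psi(\rho e(\alpha))e(-n\alpha) = \Psi(\rho)\exp\bigl(-2\pi^2\Phi_{(2)}(\rho)\alpha^2 + O(|\alpha|^3\Phi_{(3)}(\rho))\bigr).
\end{equation*}
The leading Gaussian is even in $\alpha$, so integrating $\alpha$ against it vanishes by symmetry; the first non-zero contribution arises from expanding the cubic remainder in the exponent and has size $\Psi(\rho)\Phi_{(3)}(\rho)/\Phi_{(2)}(\rho)^{5/2}$. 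By Proposition \ref{growth prop}, its ratio to the main term $\log(1/\rho)\Psi(\rho)/\sqrt{\Phi_{(2)}(\rho)}$ is $O(n^{-1/(k+1)})$ up to logarithmic factors, well within $O(n^{-1/(2k+3)})$. Quadratic and higher remainders in the Taylor expansion of $R$, as well as the truncation at the edge of $\mathfrak{M}$, are smaller still, and the theorem follows.
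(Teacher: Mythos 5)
Your proposal is correct, and its skeleton matches the paper's: the subtracted Cauchy integral with the extra factor $\rho^{-1}e(-\alpha)-1$, reuse of the arc dissection from Theorem \ref{main result full asymp}, and the observation that everything outside a short central interval is negligible because the new factor is uniformly $O(1)$. Where you diverge is in how the central interval is handled. The paper simply notes that for $|\alpha|\le\eta = X^{-(1+\frac{1}{2k})}(\log X)^{2}$ one has $\rho^{-1}e(-\alpha)-1 = \tfrac1X + O(\eta)$, and since $\eta X = X^{-1/(2k)}(\log X)^{2}\ll n^{-1/(2k+3)}$ the whole factor is a constant up to an acceptable relative error; no separate treatment of the linear term is needed. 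You instead split off $\log(1/\rho)$ globally, feed the constant piece through Theorem \ref{main result full asymp} as a black box, and kill the linear term $-2\pi i\rho^{-1}\alpha$ by parity against the Gaussian, leaving only the cubic correction of relative size $X\Phi_{(3)}/\Phi_{(2)}^{2}\asymp X/x\asymp n^{-1/(k+1)+o(1)}$. That cancellation is sound and in fact sharper than the paper's trivial $O(\eta X)$ bound for this term, though it is more work than the narrowness of the central interval requires. One point to tighten: the saddle-point expansion you invoke "on $\mathfrak{M}$" is valid only on the principal arc near the origin; the non-principal major arcs must be discarded via Lemma \ref{major arc real part lemma} exactly as in the proof of Theorem \ref{main result full asymp}, which your observation $|R(\alpha)|=O(1)$ does permit, but this should be said rather than folded into "the truncation at the edge of $\mathfrak{M}$".
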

\noindent
From Theorem \ref{difference theorem} and Proposition \ref{growth prop} we  immediately deduce an asymptotic equivalence:

\begin{corollary} \label{difference cor}  Let $\rho(n)$ be as defined above.  Then
\begin{equation*}\label{difference equiv}
p_{\mathbb{P}_k}(n+1) - p_{\mathbb{P}_k}(n) \sim    \left(\left(1+\frac1k\right)\zeta\left(1+\frac1k\right)\Gamma\left(1+\frac1k\right)\right)^{\frac{k}{k+1}} \frac{ p_{\mathbb P_k} (n) }{(n\log n)^{\frac{k}{k+1}}},
\end{equation*}
as $n\rightarrow\infty$.
\end{corollary}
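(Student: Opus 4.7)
The plan is to combine Theorem \ref{difference theorem} with Theorem \ref{main result full asymp} to express the ratio $(p_{\mathbb{P}_k}(n+1) - p_{\mathbb{P}_k}(n))/p_{\mathbb{P}_k}(n)$ as $\log(1/\rho(n))$ up to lower order terms, and then invoke equation (\ref{x log(1/rho)}) of Proposition \ref{growth prop} to evaluate $\log(1/\rho(n))$ asymptotically in terms of $n$ and $\log n$. The argument is essentially a direct deduction, so the ``work'' is purely algebraic.

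First I would divide the conclusions of Theorems \ref{difference theorem} and \ref{main result full asymp}. Since both have the common factor $\rho^{-n}\Psi(\rho)/\sqrt{2\pi\Phi_{(2)}(\rho)}$, and since the big-$O$ error terms match in size, the quotient simplifies to
\begin{equation*}
\frac{p_{\mathbb{P}_k}(n+1) - p_{\mathbb{P}_k}(n)}{p_{\mathbb{P}_k}(n)} = \log\frac{1}{\rho(n)}\,\bigl(1 + O(n^{-1/(2k+3)})\bigr).
\end{equation*}

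Next I would apply equation (\ref{x log(1/rho)}) of Proposition \ref{growth prop} with $x=n$, which yields
\begin{equation*}
n\log\frac{1}{\rho(n)} = \left(\frac{(1+\tfrac{1}{k})\zeta(1+\tfrac{1}{k})\Gamma(1+\tfrac{1}{k})\, n^{1/k}}{\log n}\right)^{\frac{k}{k+1}}(1+o(1)),
\end{equation*}
after using $\tfrac{k+1}{k} = 1+\tfrac{1}{k}$. Dividing through by $n$ and collecting exponents (the power of $n$ becomes $\tfrac{1}{k+1} - 1 = -\tfrac{k}{k+1}$) gives
\begin{equation*}
\log\frac{1}{\rho(n)} = \left((1+\tfrac{1}{k})\zeta(1+\tfrac{1}{k})\Gamma(1+\tfrac{1}{k})\right)^{\frac{k}{k+1}} (n\log n)^{-\frac{k}{k+1}}(1+o(1)).
\end{equation*}
Combining this with the previous display proves the corollary.

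I do not anticipate any real obstacle: the content of the corollary is entirely packaged in the two theorems and the proposition cited, and the step that requires the most care is simply tracking the exponents and recognizing that $\Gamma\!\left(\tfrac{k+1}{k}\right) = \Gamma\!\left(1+\tfrac{1}{k}\right)$ so that the constant in the statement matches the constant appearing in Proposition \ref{growth prop}.
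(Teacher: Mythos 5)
Your proof is correct and is exactly the deduction the paper intends (the paper states the corollary follows "immediately" from Theorem \ref{difference theorem} and Proposition \ref{growth prop} without writing out the algebra): divide by Theorem \ref{main result full asymp} to identify the ratio with $\log(1/\rho(n))$, then evaluate via \eqref{x log(1/rho)} using $\tfrac{k+1}{k}=1+\tfrac1k$ and $\tfrac{1}{k+1}-1=-\tfrac{k}{k+1}$. No issues.
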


\subsection{Organization and notation.}
In Section \ref{strategy section}, we give an outline of a method to study $p_\mathcal{A}(n)$ for a given set $\mathcal A$.  Section \ref{rho analysis section} provides an analysis of the function $\rho(x)$ and a proof of Proposition \ref{growth prop}.  The main result (Theorem \ref{main result full asymp}) is proved in Sections \ref{setup sec}, \ref{major arcs sec}, and \ref{end of proof sec}.  We prove Theorem \ref{difference theorem} in Section \ref{difference sec}.  Section \ref{appendix} contains some additional analysis of the exponential sum $S_k^*(q,a)$, included for the sake of completeness.

We use the the Vinogradov notation $f\ll g$ to mean that there exists a positive constant $C$ such that $|f|\leq C|g|$.  We write $f=g+O(h)$ to denote that $|f-g|\ll h$.  The asymptotic equivalence $f\sim g$ means that $\lim f/g = 1$.  We also use the standard notation $e(\alpha) = e^{2\pi i \alpha}$.  When working with $p_{\mathbb P_k} (n)$, the power $k$ is assumed to be fixed in advance and any implicit constants may depend on $k$.

\subsection*{Acknowledgements}  The author is grateful to Amita Malik for many fruitful conversations related to this paper, and to the anonymous referee for helpful suggestions.

\section{A general method for studying restricted partitions}\label{strategy section}
The results listed in Section \ref{intro} exhibit a pattern for the growth of restricted partition functions in general.  In fact, the proofs of these results are parallel in many respects and give way to a method by which one could analyze $p_{\mathcal A}(n)$ for a range of different sets $\mathcal A$.  In this section we summarize that method and identify the information about $\mathcal A$ required to implement it.

The technique is based on the Hardy-Littlewood circle method (See \cite{Vaughan1997}).  
Adjusting the setup in Section \ref{setup subsec} to a general set $\mathcal A$, we see that
\begin{equation}\label{circle method integral}
 p_{\mathcal{A}}(n) = \int_0^1 \rho^{-n} \exp\left(\Phi_{\mathcal{A}}(\rho e(\theta))\right) e(\theta) \,d\theta, 
 \end{equation}
where $\rho <1 $ and 
$$\Phi_{\mathcal{A}}(z) = \sum_{j=1}^\infty \sum_{a\in\mathcal A } \frac{ z^{aj}}{j} z^{aj}.$$

 In a typical implementation of the circle method, one would split the intergral \eqref{circle method integral} into the major arcs and the minor arcs, and all of the major arcs would contribute to the main term of the asymptotic formula. However, in the case of restricted partitions functions, the contribution from the major arc at the origin is significantly greater than the contribution from the rest of the major arcs. So, we split the integral into three main parts, namely
 $$p_{\mathcal{A}}(n) = \left\{\int_{\mathfrak{M}(1,0)} + \int_{\mathfrak{M}\setminus\mathfrak{M}(1,0)} + \int_{\mathfrak{m}} \right\}  \rho^{-n} \exp\left(\Phi_{\mathcal{A}}(\rho e(\theta))\right) e(\theta)  \,d\theta.$$
 We treat the remaining major arcs  in the traditional way, but the major arcs ${\mathfrak{M}(q,a)}$ with $q>1$ do not contribute to the main term of the asymptotic formula.  
 The main term comes exclusively from the first part of the integral, when $\theta$ is close to the origin.  
 
 The analysis of \eqref{circle method integral} requires different information about the set $\mathcal A$ in each of the three settings.  To evaluate the behavior on the principal major arc $\mathfrak{M}(1,0)$, we need analytic information about the \emph{Dirichlet series}  
$$\sum_{a\in\mathcal A} a^{-s},$$ 
including convergence properties, analytic continuation, zeros, singularities, and residues at poles.  To analyze the non-principal major arcs, we need to understand the \emph{distribution of $\mathcal A$ in residue classes}.
 Finally, to estimate the contribution from the minor arcs, we need bounds for certain \emph{Weyl sums over  $\mathcal A$}: 
$$\sum_{a\le y, \, a\in \mathcal A}e(ja\theta).$$

If we understand these three ingredients for a given set $\mathcal A$, we can use the techniques of this paper to analyze $p_{\mathcal A}(n)$. The difficulty is that this information is quite elusive and it is rare to find a set $\mathcal A$ for which we have a good understanding of all three pieces.   As we will see in the proof of Theorem 1.3, the current state of knowledge about the distribution of prime powers is only just sufficient to make this process work for $\mathcal A = \mathbb P_k$.

\section{Proof of Proposition 1.3}\label{rho analysis section}
In this section we obtain estimates on $\Phi(\rho)$ that will be necessary for the main result.   For convenience, we define a parameter $X=\frac{1}{\log \frac{1}{\rho}}$, so that $\rho = e^{-1/X}$.  
\begin{lemma}\label{rho d/drho lemma}
Let $\rho = e^{-1/X}$.  Then, for $m\in\mathbb{Z}_{\ge0}$, we have
\begin{equation}\label{rho d/drho}
\left(\rho \frac{d}{d\rho}\right)^m \Phi(\rho)  = \frac{\zeta\left(1+\frac1k\right) \Gamma\left(m+\frac1k\right) X^{m+\frac1k}}{\log X}\left(1+ O\left(\frac{1}{\log X}\right)\right)
\end{equation}
and 
\begin{equation}\label{Phi derivatives}
\Phi^{(m)}(\rho)  = \frac{\zeta\left(1+\frac1k\right) \Gamma\left(m+\frac1k\right) X^{m+\frac1k}}{\log X}\left(1+ O\left(\frac{1}{\log X}\right)\right),
\end{equation}
as $\rho\rightarrow 1^-$.
\end{lemma}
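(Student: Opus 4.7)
The approach is to express $\Phi_{(m)}(\rho)$ as a Mellin--Barnes contour integral whose dominant contribution comes from the logarithmic branch point of the prime zeta function $\mathcal P(s)=\sum_p p^{-s}$. First I would apply the Euler operator termwise to get
\[
\Phi_{(m)}(\rho) \;=\; \sum_{j\ge 1}\sum_p j^{m-1}p^{km} e^{-jp^k/X},
\]
then insert the Mellin representation $e^{-y} = \tfrac{1}{2\pi i}\int_{(c)}\Gamma(s)y^{-s}\,ds$ and interchange summation with integration (valid for $c > m+1/k$ by absolute convergence) to obtain
\[
\Phi_{(m)}(\rho) \;=\; \frac{1}{2\pi i}\int_{(c)} \Gamma(s)\,\zeta(s-m+1)\,\mathcal P\bigl(k(s-m)\bigr)\,X^{s}\,ds.
\]

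Next I would extract the main term from the branch point of $\mathcal P$ at $s_0 := m+1/k$. Using $\mathcal P(s) = \log\zeta(s) + \sum_{n\ge 2}\mu(n)n^{-1}\log\zeta(ns)$ together with the expansion of $\log\zeta$ at $s=1$, I would write $\mathcal P(k(s-m)) = \log\tfrac{1}{s-s_0} + h(s)$ with $h$ holomorphic in a neighborhood of $s_0$, and split the integral accordingly. The $h$-part is handled by shifting the contour to $\Re s = s_0-\delta$, which crosses no singularity (since $\zeta(s-m+1)$ has its pole at the smaller value $s=m$) and contributes only $O(X^{s_0-\delta})$. For the logarithmic part, I would use the identity $X^s\,ds = d(X^s/\log X)$ and integrate by parts. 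The boundary terms vanish on a suitably truncated contour, and the principal residual integral reduces to
\[
\frac{1}{2\pi i\,\log X}\int_{(c)} \frac{\Gamma(s)\,\zeta(s-m+1)}{s-s_0}\,X^{s}\,ds,
\]
whose residue at $s=s_0$ gives exactly the claimed main term $\Gamma(m+1/k)\,\zeta(1+1/k)\,X^{m+1/k}/\log X$, while the collateral integral involving $\log\tfrac{1}{s-s_0}$ against $h'(s)$-type factors is of strictly smaller order $X^{m+1/k}/(\log X)^2$. Standard Stirling bounds on $\Gamma$ and convexity estimates for $\zeta$ on vertical strips justify all truncations and tail bounds.

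To pass from \eqref{rho d/drho} to \eqref{Phi derivatives}, I would invoke the standard identity
\[
\rho^m \frac{d^m}{d\rho^m} \;=\; \sum_{j=0}^{m} s(m,j)\Bigl(\rho\frac{d}{d\rho}\Bigr)^{j},
\]
where $s(m,j)$ are Stirling numbers of the first kind with $s(m,m)=1$; the $j<m$ terms contribute $O(X^{m-1+1/k}/\log X)$ by \eqref{rho d/drho}, and $\rho^{-m} = 1+O(1/X)$ is absorbed into $O(1/\log X)$, so the second asymptotic follows immediately from the first. The main technical obstacle is the treatment of the branch point itself: a simple pole of $\mathcal P$ at $s_0$ would yield a clean main term of pure size $X^{s_0}$, but the logarithmic nature of the singularity shaves off an extra factor of $\log X$, and capturing the precise coefficient of $X^{s_0}/\log X$ requires the integration-by-parts maneuver above rather than a direct residue computation. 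Controlling the secondary integrals and the analytic remainder $h(s)$ with enough precision to achieve the stated relative error $O(1/\log X)$ is the most delicate part of the argument.
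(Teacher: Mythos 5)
Your overall strategy is the same as the paper's: apply the Euler operator termwise, take a Mellin transform to get the contour integral of $X^s\,\mathcal P(k(s-m))\,\zeta(s+1-m)\,\Gamma(s)$, and extract the main term from the logarithmic branch point of $\mathcal P$ at $s_0=m+\tfrac1k$. Your mechanism for the singularity differs in detail: you peel off an explicit $\log\frac{1}{s-s_0}$ and integrate by parts in $s$ to convert it into a simple pole of residue $\Gamma(m+\tfrac1k)\zeta(1+\tfrac1k)X^{s_0}/\log X$, whereas the paper runs a keyhole contour around the branch cut and uses the $2\pi i$ jump of $\log\zeta$ across it, reducing the main term to $\int_0^{c/\log T}X^{s_0-u}\zeta(1+\tfrac1k-u)\Gamma(m+\tfrac1k-u)\,du$. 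Both devices are standard and both deliver the stated relative error $O(1/\log X)$; your derivation of \eqref{Phi derivatives} from \eqref{rho d/drho} via Stirling numbers of the first kind is also exactly the paper's inductive identity.

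There is, however, one step that fails as written: the claim that the remainder $h(s)=\mathcal P(k(s-m))-\log\frac{1}{s-s_0}$ can be pushed to the line $\Re s=s_0-\delta$ for a fixed $\delta>0$ ``crossing no singularity.'' The function $h$ is holomorphic in a neighborhood of $s_0$, but it is not holomorphic in a full vertical strip to the left of $s_0$: $\log\zeta(k(s-m))$ has logarithmic branch points at every point $s=m+\varrho/k$ with $\varrho$ a nontrivial zero of $\zeta$, and unconditionally these accumulate at the line $\Re s=s_0$ from the left as $|\Im s|\to\infty$ (this is precisely the obstruction flagged in the introduction regarding Debruyne--Tenenbaum). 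A shift by a fixed $\delta$ would therefore require something like the Riemann Hypothesis. The repair is the one the paper uses: first truncate the contour at height $T=\exp(\sqrt{\log X})$ (acceptable by the $\Gamma$-decay you invoke), and then shift only to $\Re s=s_0-c/\log T$, which stays inside the classical zero-free region of $\zeta(k(s-m))$; the saving $X^{-c/\log T}=\exp(-c\sqrt{\log X})$ is still far smaller than the $1/\log X$ relative error you need. You should also route the terms $\sum_{n\ge2}$ of your Möbius decomposition of $\mathcal P$ separately (they converge for $\Re s>m+\tfrac{1}{2k}$ and can be shifted there, as the paper does with its $\mathcal D(s)$), rather than folding them into an $h$ that you then treat as globally nice. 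With those corrections your argument goes through.
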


\begin{proof}
We have
\begin{equation*}
\left(\rho \frac{d}{d\rho}\right)^m \Phi(\rho) =  \sum_{j=1}^\infty \sum_p j^{m-1} p^{km} \rho^{jp^k} = \sum_{j=1}^\infty \sum_p j^{m-1} p^{km} e^{-jp^k/X}.
\end{equation*}
Using a Mellin transform and interchanging the sums with the integral, we obtain
\begin{equation}\label{mellin}
\left(\rho \frac{d}{d\rho}\right)^m \Phi(\rho) =  \frac{1}{2\pi i} \int_{c-i\infty}^{c+i\infty} X^s \mathcal P(k(s-m)) \zeta (s+1-m) \Gamma(s) \, ds,
\end{equation}
where $\mathcal P(s) = \sum_p p^{-s} $ and $c>m+1$.  Note that $\mathcal P(s) = \log \zeta(s) - \mathcal D(s)$, where 
$$\mathcal D(s) = \sum_{j=2}^\infty \frac1j \sum_p \frac{1}{p^{js}}.$$
For any $\delta>0$, $\mathcal D(s)$ converges absolutely and uniformly for $\mathfrak{R}(s) \ge \frac12 +\delta$.  If we replace $\mathcal P(k(s-m))$ by $\mathcal D(k(s-m))$ in \eqref{mellin}, we can move the line of integration to the line $\mathfrak{R}(s) = c_0$ for any $c_0 > m +\frac{1}{2k}$.  We can then crudely bound the contribution from this piece of the integral by
\begin{equation}\label{D contribution}
\frac{1}{2\pi i} \int_{c_0-i\infty}^{c_0+i\infty} X^s \mathcal D(k(s-m)) \zeta (s+1-m) \Gamma(s) \, ds \ll X^{c_0}.
\end{equation}

We now consider the rest of the integral in \eqref{mellin}, namely
\begin{equation*}
\frac{1}{2\pi i} \int_{c-i\infty}^{c+i\infty} X^s \log\left( \zeta(k(s-m))\right) \zeta (s+1-m) \Gamma(s) \, ds.
\end{equation*}
The integrand here is analytic in the zero-free region for $\zeta(k(s-m))$ except for a logarithmic singularity at $s=m + \frac{1}{k}$.  Choosing $T = \exp(\sqrt{\log X})$, the integral can be truncated at height $T$ with an acceptable error.   The remaining part of the integral can be moved left to the line $\mathfrak{R}(s) = m + \frac1k - \frac{c}{\log T}$ (where $c$ is a suitably small positive constant), except for a keyhole loop around the singularity at $s = m+\frac1k$.  The loop runs along the bottom and top edges of the branch cut over $\{ s = \sigma : \sigma \le m+\frac1k\}$.   Aside from the parts along the cut, all pieces of this contour are well-controlled and will contribute only to the error term.  Along the bottom and top edges of the cut, the value of $\log\zeta(s)$ differs by $2\pi i$ while the value of  $X^s \zeta(s+1 - m) \Gamma(s)$ remains the same.  Thus the contribution from the pieces along the cut is 
\begin{equation*}
\frac{1}{2\pi i } \int_0^{\frac{c}{\log T}} 2\pi i \, X^{m + \frac1k -u}\, \zeta\left(1+ \frac1k - u\right) \Gamma\left(m+ \frac1k -u\right) \, du.
\end{equation*}
For any $m$, we have
$$ \zeta\left(1+ \frac1k - u\right)\Gamma\left(m + \frac1k  -u\right)  = \zeta\left(1+ \frac1k \right)\Gamma\left(m +\frac1k\right) + O(u)$$
uniformly for $u\in[0,1/2]$, so the contribution from the cuts is
\begin{equation*}
\frac{X^{m+ \frac1k}}{\log X}\zeta\left(1+ \frac1k\right)\Gamma\left(m+\frac1k\right) \left(1 +O\left( \frac{1}{\log X}\right)\right).
\end{equation*}
Choosing $c_0 = m + \frac{3}{4k}$, say, in \eqref{D contribution}, gives \eqref{rho d/drho}.  To obtain \eqref{Phi derivatives}, we first notice that the case $m=0$ is immediate. By induction on $m$, we have 
\begin{equation*}
\rho^{m} \Phi^{(m)}(\rho)  = \sum_{i=1}^m c_{i,m} \left(\rho \frac{d}{d\rho}\right)^i \Phi(\rho),
\end{equation*}
where the $c_{i,m}$ are real numbers with $c_{m,m} =1.$  Since $\rho = 1 + O(1/X)$, the result follows.
\end{proof}

\subsection{Proof of Proposition \ref{growth prop}}
Suppose that $x$ is sufficiently large.  Then $\rho$ is close to $1$ and so $X= \frac{1}{\log(\frac1\rho)}$ is also large.
By Lemma \ref{rho d/drho lemma}, we see that 
\begin{equation}\label{x in terms of X}
x = \rho \frac{d}{d\rho} \Phi(\rho)  = \frac{\zeta\left(1+\frac1k\right) \Gamma\left(1+\frac1k\right) X^{\frac{k+1}{k}}}{\log X}\left(1+ O\left(\frac{1}{\log X}\right)\right)
\end{equation}
Thus 
\begin{equation*}
\log x =  \log(X^{\frac{k+1}{k} }) -\log\log X + O(1)
\end{equation*}
Since $\log x \ll \log X \ll \log x$, we have that $\log\log X = \log\log x +O(1)$. Thus
\begin{equation*}
\log X  = \frac{k}{k+1} \log x + \frac{k}{k+1} \log\log x + O(1)  
\end{equation*}
Putting this into \eqref{x in terms of X} and solving for $X$, we find
\begin{equation}\label{X in terms of x}
X = \left(\frac{k}{k+1} \frac{x\log x}{\zeta\left(1+\frac1k\right) \Gamma\left(1+\frac1k\right) } \right)^{\frac{k}{k+1}} \left(1 + \frac{k}{k+1}\frac{\log\log x}{\log x} +O \left( \frac{1}{\log x}\right)\right).
\end{equation}
The fact that $x\log \frac{1}{\rho} =xX^{-1}$ yields  \eqref{x log(1/rho)}.  Combining \eqref{x in terms of X} with Lemma  \ref{rho d/drho lemma}, we obtain 
\begin{equation}\label{transition xX}
\Phi_{(m)} (\rho) = \left(\rho \frac{d}{d\rho}\right)^m \Phi(\rho) = xX^{m-1} \frac{\Gamma\left(m+\frac1k\right)}{\Gamma\left(1+\frac1k\right)} \left(1+ O\left(\frac{1}{\log x}\right)\right).
\end{equation}
Plugging \eqref{X in terms of x} into \eqref{transition xX} gives \eqref{phi rho} when $m=0$ and \eqref{Phi_m estimate} when $m\ge 1$.
\newline \null \hfill$\square$

\section{Proof of Theorem \ref{main result full asymp}:  The Setup}\label{setup sec}
We define the major and minor arcs as follows.  Set
\begin{equation*}
\delta_q = q^{-1} X^{-1}(\log X)^{12k}, \quad Q = (\log X)^{12k},  
\end{equation*}
and for $1\le a\le q \le Q$ with $(a,q)=1$, define
\begin{equation*}
\mathfrak{M}(q,a) = \left(\frac{a}{q}  - \delta_q,   \frac{a}{q}  + \delta_q\right).
\end{equation*}
The major arcs  $\mathfrak M$ and the minor arcs $\mathfrak m$ are given by
$$\mathfrak M= \bigcup_{\substack{1\le a\le q\le Q\\ (a,q)= 1}} \mathfrak{M}(q,a), \quad \mathfrak m = \left[-1/2,1/2\right)\setminus\mathfrak{M}.$$

 As discussed in Section \ref{strategy section}, we divide the integral \eqref{Cauchy integral} into three pieces: the principal major arc $\mathfrak{M}(1,0)$, the non-principal major arcs $\mathfrak{M}(q,a)$ with $q>1$, and the minor arcs $\mathfrak m$.  The main term will come exclusively from principal major arc, which is analyzed in Section \ref{end of proof sec}.  Our bound for the denominators in the major arcs is limited by the scope of the Siegel-Walfisz theorem.  We choose the exponent of $\log X$ to be $12k$ to obtain a satisfactory bound on the minor arcs.

We begin with some preliminary analysis of $\Phi(\rho e(\alpha))$ that will be used throughout the proof of Theorem \ref{main result full asymp}.  From the definition, we have
\begin{equation*}
\Phi( \rho e(\alpha)) = \sum_{j=1}^\infty \frac1j  \sum_p e^{-p^k j / X} e(j p^k \alpha). 
\end{equation*}
We write 
\begin{equation*}
 e^{-p^k j / X}= \int_p^\infty k y^{k-1} j X^{-1} e^{-y^k j / X} \, dy.
\end{equation*}
Thus 
\begin{equation*}
\sum_p e^{-p^k j / X} e(j p^k \alpha) =  \int_2^\infty k y^{k-1} j X^{-1} e^{-y^k j / X} \sum_{p\le y} e(j p^k \alpha)\, dy.
\end{equation*}
It is useful to observe the crude bound 
\begin{equation*}
 \int_2^\infty k y^{k-1} j X^{-1} e^{-y^k j / X} \sum_{p\le y} e(j p^k \alpha)\, dy  \ll  \int_0^\infty y\, ky^{k-1} j X^{-1} e^{-y^k j / X}\, dy.
\end{equation*}
Using integration by parts, we have that for any  $\lambda > 0$,
\begin{equation} \label{integral for any lambda}
\int_2^\infty y^\lambda \left(k y^{k-1} j X^{-1} e^{-y^k j / X} \right)\, dy \ll  \left(\frac{X}{j}\right)^{\lambda/k}.
\end{equation}
Let $J$ be a parameter at our disposal.  Then 
\begin{equation*}
 \sum_{j=J+1}^\infty \frac1j  \int_2^\infty k y^{k-1} j X^{-1} e^{-y^k j / X} \sum_{p\le y} e(j p^k \alpha)\, dy\ll  \sum_{j=J+1}^\infty \frac1j \left(\frac{X}{j}\right)^{1/k} \ll \left(\frac{X}{J}\right)^{1/k}.
\end{equation*}
In summary, for any $J\ge 1$ we have 
\begin{align} \label{truncation of Phi}
\Phi( \rho e(\alpha)) & = \sum_{j=1}^J \frac1j  \sum_p e^{-p^k j / X} e(j p^k \alpha) + O\left( \left(\frac{X}{J}\right)^{1/k}\right)\\ 
\nonumber & \sum_{j=1}^J \frac1j \int_2^\infty k y^{k-1} j X^{-1} e^{-y^k j / X} \sum_{p\le y} e(j p^k \alpha)\, dy + O\left( \left(\frac{X}{J}\right)^{1/k}\right).
\end{align}

We conclude this section with an estimate of $\Phi(\rho e(\alpha))$ on the minor arcs.  
\begin{lemma}\label{minor arcs lemma}
For $\alpha \in \mathfrak{m}$, 
\begin{equation*}
\Phi( \rho e(\alpha)) \ll X^{\frac1k}(\log X)^{-2+\varepsilon}.
\end{equation*}
\end{lemma}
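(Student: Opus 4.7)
The starting point is the truncation identity \eqref{truncation of Phi} with $J$ taken to be a fixed power of $\log X$; concretely, I would set $J := (\log X)^{3k}$, so the tail contribution is $O\bigl(X^{1/k}(\log X)^{-3}\bigr)$ and already within the target error. It then suffices to show that the retained sum
$$\sum_{j=1}^{J} \frac{1}{j} \int_2^\infty k y^{k-1} j X^{-1} e^{-y^k j/X} \sum_{p\le y} e(jp^k \alpha)\, dy$$
is bounded by $X^{1/k}(\log X)^{-2+\varepsilon}$ uniformly for $\alpha \in \mathfrak{m}$.

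For each $j \le J$, Dirichlet's theorem applied to $j\alpha$ at level $N := X(\log X)^{-12k}$ produces coprime integers $a', q'$ with $1 \le q' \le N$ and $|j\alpha - a'/q'| \le 1/(q' N)$.  Writing $\alpha = a'/(q'j) + O\bigl(1/(q'jN)\bigr)$ and reducing $a'/(q'j)$ to lowest terms $a/q$, we have $q \le q'j$ and $|\alpha - a/q| \le 1/(q'jN) \le \delta_q$. Hence $\alpha \in \mathfrak{m}$ forces $q > Q = (\log X)^{12k}$, whence $q' \ge q/j \ge Q/J = (\log X)^{9k}$. Thus the denominator of the Dirichlet approximation to $j\alpha$ lies in the classical minor-arc range $(\log X)^{9k} \le q' \le X(\log X)^{-12k}$.

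With $q'$ in this range, the Vinogradov--Vaughan bound for prime exponential sums with polynomial phase (obtained by combining Vaughan's identity with the Siegel--Walfisz theorem on Type I sums and Weyl differencing against $p^k$ on Type II sums; see \cite{Vaughan1997}) yields, for any $A > 0$,
$$\sum_{p \le y} e(j\alpha p^k) \ll y (\log y)^{-A},$$
uniformly in $j \le J$ and for $y \ge (X/j)^{1/k}(\log X)^{-1}$. For smaller $y$, the trivial bound $\sum_{p \le y} 1 \ll y$ suffices, since the exponential weight $e^{-y^k j/X}$ carries negligible mass on that range. The calibration $Q = (\log X)^{12k}$ in the definition of $\mathfrak{M}$ is precisely what places the required Type I estimates inside the Siegel--Walfisz range.

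Substituting into the integral and applying \eqref{integral for any lambda} with $\lambda = 1$ (noting that $\log y \asymp \log X$ on the bulk of the integration range) gives
$$\int_2^\infty k y^{k-1} j X^{-1} e^{-y^k j/X} \left|\sum_{p \le y} e(jp^k \alpha)\right| dy \ll \left(\frac{X}{j}\right)^{\!1/k} (\log X)^{-A}.$$
Summing over $1 \le j \le J$ with the weight $1/j$, the convergence of $\sum_{j \ge 1} j^{-1-1/k}$ yields a total of $O\bigl(X^{1/k}(\log X)^{-A}\bigr)$; taking $A = 2$ completes the proof. \textbf{The principal obstacle} is the uniform-in-$j$ Vinogradov--Vaughan bound on the prime exponential sum: this is the only piece of deep input in the argument, and the parameters $Q$ and $J$ must be chosen in tandem to guarantee that its hypotheses are satisfied throughout.
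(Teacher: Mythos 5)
Your overall architecture is the same as the paper's: truncate the $j$-sum, apply Dirichlet's theorem to $j\alpha$ at level $X(\log X)^{-12k}$, deduce from $\alpha\in\mathfrak m$ that the denominator $q'$ of the approximation is large, bound the prime exponential sum, integrate against the weight via \eqref{integral for any lambda}, and sum over $j$. The gap is the central input. The bound you invoke,
\begin{equation*}
\sum_{p\le y} e(j\alpha p^k) \ll y(\log y)^{-A} \quad \text{for every } A>0,
\end{equation*}
is not an available theorem for $k\ge 2$: estimating $\sum_{p\le y}e(\theta p^k)$ on minor arcs is essentially the hard step of the Waring--Goldbach problem, and Vaughan's identity plus Weyl differencing does not deliver arbitrary logarithmic savings there. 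Even in the linear case $k=1$, the Vinogradov--Vaughan estimate saves only a power of $\log$ determined by the size of $q'$ (roughly $q'^{-1/2}$ up to $(\log y)^4$ losses), so to win $(\log y)^{-A}$ for arbitrary $A$ you would need $q'$ to exceed an arbitrarily large power of $\log X$ — but your setup only guarantees $q'\ge (\log X)^{9k}$, a fixed power. What the paper actually uses is the Kawada--Wooley bound
\begin{equation*}
\sum_{p\le y} e(jp^k\alpha) \ll y^{1-\eta(k)+\varepsilon} + \frac{q'^{-\frac{1}{2k}+\varepsilon}\, y(\log y)^4}{\bigl(1+y^k|j\alpha-a'/q'|\bigr)^{1/2}},
\end{equation*}
whose saving is only $q'^{-1/(2k)+\varepsilon}(\log y)^4$. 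The exponent $12k$ in $Q$ and the exponent $-2+\varepsilon$ in the lemma are calibrated exactly to this: with $q'> j^{-1}(\log X)^{12k}$ one gets $q'^{-1/(2k)+\varepsilon}\ll j^{1/(2k)}(\log X)^{-6+\varepsilon}$, and $-6+4=-2$. (Your aside that $Q=(\log X)^{12k}$ is dictated by Siegel--Walfisz concerns the major arcs, not this lemma.)

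Two further quantitative points. First, if you substitute the correct Kawada--Wooley input but keep your crude lower bound $q'\ge Q/J=(\log X)^{9k}$ uniformly in $j$, you only obtain $X^{1/k}(\log X)^{-1/2+\varepsilon}$, which is weaker than $\Phi(\rho)\asymp X^{1/k}(\log X)^{-1}$ and therefore useless in Section \ref{end of proof sec}; you must retain the $j$-dependence $q'>Q/j$ and absorb the resulting $j^{1/(2k)}$ into the convergent sum $\sum_j j^{-1-1/k+1/(2k)}$, as the paper does. Second, your truncation $J=(\log X)^{3k}$ is harmless but unnecessary; the paper takes $J=\lfloor\sqrt X\rfloor$ so that the tail is $O(X^{1/2k})$, and the sum over $j\le J$ still converges because of the extra factor $j^{-1/k}$ from \eqref{integral for any lambda}.
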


\begin{proof}

Fix $j\le \sqrt{X}$ and consider 
\begin{equation}\label{minor arc reduction}
  \int_2^\infty k y^{k-1} j X^{-1} e^{-y^k j / X} \sum_{p\le y} e(j p^k \alpha)\, dy.
 \end{equation}
We use Dirichlet's theorem to choose $a\in \mathbb Z$, $q\in \mathbb N$, so that 
$$\left| j\alpha - \frac{a}{q}\right| \le q^{-1} X^{-1} (\log X)^{12k}.$$
Note that since $\alpha\in\mathfrak m$, we must have $q> j^{-1}(\log X)^{12k}$.
By Kawada and Wooley \cite{KawWoo2001}, we have 
\begin{equation*}\label{Weyl-type bound} 
\sum_{p\le x} e(jp^k \alpha) \ll x^{1-\eta(k) + \varepsilon} + \frac{q^{-\frac{1}{2k} + \varepsilon} x(\log x)^4}{\left(1+x^k\left| j\alpha - \frac{a}{q}\right|\right)^{1/2}},
\end{equation*} 
where $0<\eta(k)<\frac12$ is a constant that can be made explicit.  Recalling \eqref{integral for any lambda} and observing that
\begin{equation*}
\int_2^\infty y (\log y)^4 \left(k y^{k-1} j X^{-1} e^{-y^k j / X} \right)\, dy \ll  \left(\frac{X}{j}\right)^{1/k} \left(\log\left(\frac{X}{j}\right)\right)^{4},
\end{equation*}
we see that the expression in \eqref{minor arc reduction} is 
\begin{align*}
& \int_2^\infty k y^{k-1} j X^{-1} e^{-y^k j / X} \sum_{p\le y} e(j p^k \alpha)\, dy \ll  \left(\frac{X}{j}\right)^{\frac{1-\eta(k) + \varepsilon}{k}} + q^{-\frac{1}{2k}+\varepsilon}\left(\frac{X}{j}\right)^{1/k} \left(\log \left(\frac{X}{j}\right)\right)^{4}\\ &
\ll  \left(\frac{X}{j}\right)^{\frac{1-\eta(k) + \varepsilon}{k}} +  \left(\frac{(\log X)^{12k}}{j}\right)^{-\frac{1}{2k}+\varepsilon}\left(\frac{X}{j}\right)^{1/k} \left(\log\left(\frac{X}{j}\right)\right)^{4}.
\end{align*}
  Putting this into \eqref{truncation of Phi} with $J=\lfloor\sqrt X\rfloor$, we obtain
\begin{align*}
\Phi( \rho e(\alpha)) & =  \sum_{j=1}^J \frac1j  \int_2^\infty k y^{k-1} j X^{-1} e^{-y^k j / X} \sum_{p\le y} e(j p^k \alpha)\, dy+  O\left(\left(\frac{X}{J}\right)^{\frac{1}{k}}\right) \\ & 
\ll X^{\frac{1-\eta(k)+\varepsilon}{k}} \left(\sum_{j=1}^J j^{-(1+\frac{1+\varepsilon}{k} - \frac{\eta(k)}{k})}\right) +  X^{\frac1k}(\log X)^{4-6+\varepsilon}\left(\sum_{j=1}^J j^{-(1+\frac{1+\varepsilon}{k} - \frac{1}{2k})}\right) + X^{\frac{1}{2k}} \\ &
\ll X^{\frac1k}(\log X)^{-2+\varepsilon},
\end{align*}
as desired.
\end{proof}

\section{Proof of Theorem \ref{main result full asymp}: Major arc estimates} \label{major arcs sec}
In this section we investigate the behavior of $\Phi(\rho e(\alpha))$ for $\alpha \in \mathfrak{M}$.  The goal is to obtain an estimate for $\Re(\Phi(\rho e(\alpha))$ so that we can bound the integrand of \eqref{Cauchy integral} on the major arcs with $q>1$.

\begin{lemma} \label{major arc estimate lemma}Suppose that $A$ is a positive real number, and that $X$ satisfies $X>X_0(A)$. Suppose also that $\alpha, \beta \in \mathbb{R}$, $q \in \mathbb N$, and $a\in\mathbb Z$ are such that $(a,q) = 1,$ $q\le (\log X)^A$, $\beta = \alpha-\frac{a}{q}$, $|\beta| \le q^{-1} X^{-1} (\log X)^A$.  Then 
\begin{equation*}
\Phi(\rho e(\alpha)) =  \frac{X^{1/k}\Gamma(1/k)}{(1 - 2\pi i \beta X)^{1/k}\log X} \sum_{j \le\sqrt{X}} \frac{S_k^*(q_j,a_j)}{j^{1+1/k}\varphi(q_j)} +  O\left(\frac{X^{1/k}\log\log X}{(\log X)^2}\right).
\end{equation*}
where $q_j = q/(q,j)$, $a_j = aj/(q,j)$, and 
$$S_k^*(q,a) = \sum_{\substack{\ell=1\\(\ell,q)=1}}^q e\left(\frac{a\ell^k}{q}\right).$$
\end{lemma}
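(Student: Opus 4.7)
The plan is to start from the truncation identity \eqref{truncation of Phi} with $J = \lfloor\sqrt{X}\rfloor$, whose error $O(X^{1/(2k)})$ is comfortably smaller than the target. Writing $\alpha = a/q + \beta$ and splitting the prime sum by residue class modulo $q_j = q/(q,j)$, the identity $aj/q = a_j/q_j$ gives
$$\sum_{p\le y} e(jp^k\alpha) = \sum_{\ell=1}^{q_j} e\!\left(\frac{a_j \ell^k}{q_j}\right) \sum_{\substack{p\le y \\ p\equiv \ell \pmod{q_j}}} e(jp^k\beta).$$
Residue classes with $(\ell,q_j)>1$ contribute only primes dividing $q_j$, whose total mass is $O(\log q_j)$ and is negligible after all subsequent summations. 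So effectively $\ell$ runs over reduced residues and the $\ell$-sum reproduces the Gauss-type sum $S_k^*(q_j,a_j)$.

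Next I would apply the Siegel--Walfisz theorem, which is available because $q_j \le q \le (\log X)^A$, and use partial summation to replace the inner prime sum by its expected main term:
$$\sum_{\substack{p\le y \\ p\equiv \ell \pmod{q_j}}} e(jp^k\beta) = \frac{1}{\varphi(q_j)} \int_2^y \frac{e(jt^k\beta)}{\log t}\,dt + E(y),$$
with $E(y) \ll y\exp(-c\sqrt{\log y})$ uniformly in $\ell$. Inserting this into the $y$-integral in \eqref{truncation of Phi} and exchanging the order of integration via Fubini, the inner $y$-integral from $t$ to $\infty$ against the density $ky^{k-1}jX^{-1}e^{-y^kj/X}$ collapses to $e^{-t^kj/X}$, producing
$$\sum_p e^{-p^kj/X}\,e(jp^k\alpha) = \frac{S_k^*(q_j,a_j)}{\varphi(q_j)} \int_2^\infty \frac{e^{-t^k j(1-2\pi i\beta X)/X}}{\log t}\,dt + \text{(error)}.$$

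Finally I would evaluate this single integral asymptotically. The substitution $w = t^k j(1-2\pi i\beta X)/X$ converts it into
$$\frac{1}{k}\left(\frac{X}{j(1-2\pi i\beta X)}\right)^{1/k} \int_{w_0}^\infty \frac{e^{-w}\,w^{1/k-1}}{\log t(w)}\,dw,$$
and writing $\log t(w) = \tfrac{1}{k}\log X + \tfrac{1}{k}\log(w/j) - \tfrac{1}{k}\log(1-2\pi i\beta X)$, the leading term $\tfrac{1}{k}\log X$ together with $\int_0^\infty e^{-w}w^{1/k-1}dw = \Gamma(1/k)$ yields the contribution
$$\frac{X^{1/k}\Gamma(1/k)}{j^{1/k}(1-2\pi i\beta X)^{1/k}\log X}.$$
Multiplying by $1/j$ and summing $j\le\sqrt{X}$ produces the stated main term. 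The main obstacle is the bookkeeping of errors. The Siegel--Walfisz error, summed against $|S_k^*(q_j,a_j)|/\varphi(q_j) \ll 1$ and over $j$, costs only $X^{1/k}\exp(-c\sqrt{\log X})$, which is harmless. The truly delicate point is the replacement of $1/\log t$ by $1/\log X$: since $|\beta X|$ may be as large as $(\log X)^A$, the significant range of $w$ extends down to $w \sim (\log X)^{-A}$, so $|\log w|$ can reach order $\log\log X$, and the Taylor expansion $1/\log(wX/j) = 1/\log X + O(\log\log X/(\log X)^2)$ combined with the convergence of $\sum_j j^{-1-1/k}$ is exactly what produces the claimed error $O(X^{1/k}\log\log X/(\log X)^2)$.
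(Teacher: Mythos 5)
Your proposal follows essentially the same route as the paper: truncation at $J=\lfloor\sqrt X\rfloor$, residue-class decomposition producing $S_k^*(q_j,a_j)$, Siegel--Walfisz plus partial summation, and evaluation of the resulting single integral via a rotated-contour Gamma-function substitution with $1/\log t$ replaced by $k/\log X$ at a relative cost of $O((\log j+\log\log X)/\log X)$. The only cosmetic differences are that the paper applies Abel summation against $\pi(t;q_j,\ell)$ with the weight $\exp(jt^k(2\pi i\beta-1/X))$ directly, explicitly tracking the factor $1+|\beta|X$ in the Siegel--Walfisz error (which you implicitly absorb, legitimately, since $|\beta|X\le(\log X)^A$), and that it extracts $k/\log X$ before performing the contour rotation rather than after.
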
 
\begin{proof}
Recalling \eqref{truncation of Phi}, we note that for any integer $J$ 
\begin{align}
& \Phi\left(\rho e\left(\frac{a}{q} + \beta\right)\right)  = \sum_{j =1}^J  \frac{1}{j} \sum_{p \text{ prime}}e\left(\frac{ajp^k}{q}\right) \exp\left(j p^k(2\pi i \beta - 1/X)\right) + O\left(\left(\frac{X}{J}\right)^{\frac{1}{k}}\right) \nonumber \\  
\label{major arc innermost sum}
  & = \sum_{j =1}^J  \frac{1}{j} \left(\sum_{\substack{\ell = 1\\ (\ell, q_j) = 1}}^{q_j} e\left(\frac{a_j\ell^k}{q_j}\right)\sum_{p \equiv \ell\bmod{q_j} }  \exp\left(j p^k(2\pi i \beta - 1/X)\right) + O(q_j^{\varepsilon})\right) + O\left(\left(\frac{X}{J}\right)^{\frac{1}{k}}\right). 
\end{align}
Let $J=\lfloor\sqrt X\rfloor$.  By Abel summation,
\begin{equation}\label{Abel result}
\sum_{p \equiv \ell\bmod{q_j} }  \exp\left(j p^k(2\pi i \beta - 1/X)\right)=   \int_2^\infty   \pi(t;q_j, \ell) \left(\frac{j}{X}- 2\pi i j\beta\right)  kt^{k-1}   \exp(j t^k(2\pi i \beta - 1/X)) \, dt.
\end{equation}

For $t>(\frac{X}{j})^{1/k}$, we apply the Siegel-Walfisz theorem to obtain
\begin{equation*}
 \pi(t;q_j, \ell) = \frac{Li(t)}{\varphi(q_j)} + O\left(t \exp\left(-c\sqrt{\log t}\right)\right) = \frac{Li(t)}{\varphi(q_j)} + O\left(t \exp\left(-c\sqrt{k\log(X/j)}\right)\right)
\end{equation*}
 for some constant $c$ depending only on $A$.  When $t\le (\frac{X}{j})^{1/k}$ we clearly have 
\begin{equation*}
 \pi(t;q_j, \ell) = \frac{Li(t)}{\varphi(q_j)} + O\left(\left(\frac{X}{j}\right)^{1/k} \exp\left(-c\sqrt{k\log(X/j)}\right)\right).
\end{equation*}
Thus the integral in \eqref{Abel result} is (via integration by parts)
\begin{equation*}
  \int_2^\infty \frac{\exp(j t^k(2\pi i \beta - 1/X))}{\varphi(q_j)\log t}\, dt  
 + O\left(\left(1+ |\beta|X \right) \left(\frac{X}{j}\right)^{1/k} \exp\left(-c\sqrt{k\log(X/j)}\right)\right). 
\end{equation*}



Putting this into \eqref{major arc innermost sum}, we have
\begin{align}\label{Phi into S(q,a) sum}
& \left|\Phi\left(\rho e\left(\frac{a}{q} + \beta\right)\right) -  \sum_{j \le\sqrt{X}} \frac{S_k^*(q_j,a_j)}{j\varphi(q_j)} \int_2^\infty \frac{\exp(j t^k(2\pi i \beta - 1/X))}{\log t}\, dt \right|
\\ &  \ll \sum_{j \le\sqrt{X}}\frac{\varphi(q_j)}{j} \left(1+ |\beta|X \right)  \left(\frac{X}{j}\right)^{1/k}\exp\left(-c\sqrt{k\log(X/j)}\right)+ X^{1/2k} \nonumber
\\ & \ll  \varphi(q)\left(1+ |\beta|X \right)X^{1/k}\exp\left(-c\sqrt{\frac{k}{2}\log(X)}\right)\ll \frac{X^{1/k}\log\log X}{(\log X)^2}, \nonumber
\end{align}
since 
$\varphi(q)|\beta|X \le (\log X)^A$, and $\exp\left(-c\sqrt{\frac{k}{2}\log(X)}\right) \ll (\log X)^{-(A+2)}$.

It remains to evaluate the integral 
\begin{equation*}
\int_2^\infty \frac{\exp(j t^k(2\pi i \beta - 1/X))}{\log t}\, dt.
\end{equation*}
Note that the integrand has absolute value less than $1$ for all $t$.  Thus 
$$\int_2^{\left(\frac{X}{j}\right)^{1/k}\frac{1}{(\log X)^2}} \ \frac{\exp(j t^k(2\pi i \beta - 1/X))}{\log t}\, dt\ll \left(\frac{X}{j}\right)^{1/k}\frac{1}{(\log X)^2}.$$
Meanwhile, the contribution from $t\ge  \left(\frac{X}{j}\right)^{1/k}\log\log X$ is 
\begin{align*}
 & \le \int_{\left(\frac{X}{j}\right)^{1/k}\log\log X}^\infty \frac{\exp(-j t^k /X)}{\log t}\, dt  
\ll  \left(\frac{X}{j}\right)^{1/k}\frac{1}{(\log X)} \int_{\log\log X}^\infty e^{-u}\, du 
=  \left(\frac{X}{j}\right)^{1/k}\frac{1}{(\log X)^2} .
\end{align*}
If $\left(\frac{X}{j}\right)^{1/k}(\log X)^{-2} \le t\le  \left(\frac{X}{j}\right)^{1/k}\log\log X,$
then we have 
$$\frac{1}{\log t} = \frac{k}{\log X} + O\left(\frac{\log j+ \log\log X}{(\log X)^2}\right).$$
Furthermore
$$\int_{\left(\frac{X}{j}\right)^{1/k}(\log X)^{-2}}^{\left(\frac{X}{j}\right)^{1/k}\log\log X} \exp(-j t^k/X)\, dt \le 
\int_0^\infty \exp(-j t^k/X)\, dt \ll \left(\frac{X}{j}\right)^{1/k}.$$
So we have 
\begin{align*}
& \int_2^\infty \frac{\exp(j t^k(2\pi i \beta - 1/X))}{\log t}\, dt \\ &= \frac{k}{\log X} \int_0^\infty \exp\left(-t^k jX^{-1}(1-2\pi i \beta X)\right)\, dt +  O\left( \left(\frac{X}{j}\right)^{1/k}\frac{\log j+ \log\log X}{(\log X)^2}\right).
\end{align*}
We make the substitution $z = \left(jX^{-1}(1-2\pi i \beta X)\right)^{1/k} t$.  Choose $\phi$ so that $|\phi|<1/2$ and 
$$\frac{1-2\pi i \beta X}{|1-2\pi i \beta X|} = e^{2\pi i\phi}.$$
We thus obtain 
$$z = \left(jX^{-1}|1-2\pi i \beta X|\right)^{1/k} e^{2\pi i\phi/k} t.$$
This gives
$$\int_0^\infty \exp\left(-t^k jX^{-1}(1-2\pi i \beta X)\right)\,dx =  \left(\frac{X}{j(1-2\pi i \beta X)}\right)^{1/k}\int_\mathcal{L} e^{-z^k}\,dz,$$
where $\mathcal{L}$ is the ray $\{z = t e^{2\pi i\phi/k}:  0\le t < \infty\}$.   By Cauchy's theorem, the integral here is equal to $\Gamma(\frac{k+1}{k})$. 
Inserting this into \eqref{Phi into S(q,a) sum} we obtain
\begin{equation*}
\Phi(\rho e(\alpha)) =  \frac{k\Gamma(\frac{k+1}{k})X^{1/k}}{(1 - 2\pi i \beta X)^{1/k}\log X} \sum_{j \le\sqrt{X}} \frac{S_k^*(q_j,a_j)}{j^{1+1/k}\varphi(q_j)} + O\left(\frac{X^{1/k}\log\log X}{(\log X)^2}\right).
\end{equation*}
The result follows upon noticing that $k\Gamma(\frac{k+1}{k}) = \Gamma(\frac{1}{k})$.
\end{proof}

Ultimately, we need to bound the real part of $\Phi(\rho e(\alpha))$ on the major arcs.  Lemma \ref{major arc estimate lemma} shows that an understanding of $S_k^*(q,a)$ is central to that goal.  In Section \ref{appendix}, we analyze $S_k^*(q,a)$ asymptotically and show there exists a constant $C_k$ such that $|S_k^*(q,a)|\le C_k q^{-\frac1k} \varphi(q),$ for all $(q,a) =1$.  From that theory and the following lemma, we can achieve the bound that we need.

\begin{lemma}\label{major arc real part with delta}
For all $q>2$, we have 
$$\left|\Re \left(\frac{ S_k^* (q,a)}{(1 - 2\pi i \beta X)^{1/k}}\right)\right| \le \left( 1 - \delta_k\right) \varphi(q),$$
where 
$$ \delta_k = \frac{\pi^2}{2^{2k+3} C_k^{2k}}$$ and 
$$C_k = \begin{cases} 128 &\mbox{if } k=2 \\
\prod_{p\le k^6} k & \mbox{if }k\ge3 \end{cases} .$$
\end{lemma}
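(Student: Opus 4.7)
Setting $u := 2\pi\beta X$ and $\psi := \arctan(u)/k \in (-\tfrac{\pi}{2k},\tfrac{\pi}{2k})$, we have $(1-iu)^{-1/k} = \cos(k\psi)^{1/k}e^{i\psi}$. Writing $S_k^*(q,a) = Re^{i\theta}$, the quantity to bound becomes $R\cos(k\psi)^{1/k}|\cos(\theta+\psi)|$. The plan is to separate a ``large'' regime, handled by the absolute-value estimate $R \le C_k q^{-1/k}\varphi(q)$ from Section~\ref{appendix}, from a ``small'' regime in which $q$ is bounded in terms of $k$ alone. In the large regime the triangle inequality gives
\[
\left|\Re\!\left(\frac{S_k^*(q,a)}{(1-iu)^{1/k}}\right)\right| \le \frac{C_k\,\varphi(q)}{q^{1/k}(1+u^2)^{1/(2k)}},
\]
which is at most $(1-\delta_k)\varphi(q)$ as soon as $q^{1/k}(1+u^2)^{1/(2k)} \ge C_k/(1-\delta_k)$. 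This reduces matters to the range $q \le (2C_k)^k$ (using $\delta_k < 1/2$).

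For $q$ in this finite range, bifurcate on $R$. If $R \le (1-\delta_k)\varphi(q)$ the bound is immediate, so suppose not. Expanding
\[
\varphi(q)^2 - R^2 \;=\; 2\sum_{\ell_1,\ell_2}\sin^2\!\left(\pi a(\ell_1^k-\ell_2^k)/q\right) \;\ge\; \frac{8\varphi(q)^2}{q^2}\left(1-\frac{1}{|H|}\right),
\]
with $H := \{\ell^k \bmod q : (\ell,q)=1\}$ (using $\sin^2(\pi j/q) \ge 4/q^2$ for $j \not\equiv 0 \pmod q$, together with the elementary count that exactly $\varphi(q)^2/|H|$ pairs have $\ell_1^k \equiv \ell_2^k$). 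If $|H| \ge 2$, this gives $R \le \varphi(q)(1-2/q^2)$; for $q \le (2C_k)^k$ one checks $\delta_k < 2/q^2$ (reducing to the numerical inequality $\pi^2 < 16$), contradicting $R > (1-\delta_k)\varphi(q)$. Hence $|H|=1$, meaning $\ell^k \equiv 1 \pmod q$ for every unit $\ell$, so $S_k^*(q,a) = \varphi(q)\,e(a/q)$ and $\theta = 2\pi a/q$.

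In this remaining extremal case, maximize $\cos(k\psi)^{1/k}\cos(\theta+\psi)$ on $\psi \in (-\pi/(2k),\pi/(2k))$ by solving $\sin((k+1)\psi+\theta)=0$; back-substitution shows the constrained supremum equals $\cos(kd/(k+1))^{(k+1)/k}$, where $d := \operatorname{dist}(\theta,\pi\mathbb{Z})$. Since $(a,q)=1$ with $q>2$ forces $a \ne 0,q$ and $|2a-q|\ge 1$, we get $d \ge \pi/q$. The Taylor estimate $\cos(\epsilon)^{(k+1)/k} \le 1 - \tfrac{k+1}{2k}\epsilon^2 + O(\epsilon^4)$ applied at $\epsilon = k\pi/(q(k+1))$ then produces an upper bound of the form $1 - k\pi^2/(2(k+1)q^2)$, and the needed inequality $\delta_k \le k\pi^2/(2(k+1)q^2)$ for $q \le (2C_k)^k$ reduces to $3k \ge 1$, trivially valid.

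I expect the main obstacle to be verifying the dichotomy in the second paragraph---specifically, that the constants $C_k$ in the lemma were chosen precisely so that the coarse gap $2/q^2$ (from $|H| \ge 2$) dominates $\delta_k$ uniformly over $q \le (2C_k)^k$. When $k$ is odd a pleasant simplification arises: since $(-1)^k = -1 \not\equiv 1 \pmod q$ for $q>2$, the image $H$ contains both $1$ and $-1$, forcing $|H| \ge 2$ automatically and rendering the extremal branch vacuous.
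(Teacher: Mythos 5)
Your argument is correct in substance but follows a genuinely different route from the paper's. The paper also first disposes of $q>(2C_k)^k$ via Proposition \ref{Uniform S bound prop}, but for bounded $q$ it argues termwise: since $(a\ell^k,q)=1$, each phase $a\ell^k/q$ lies at distance $\ge 1/q$ from $\mathbb Z$ and $\ge 1/(2q)$ from $\mathbb Z+\tfrac12$, and one then splits on whether the argument $\phi$ of $1-2\pi i\beta X$ is small (so the twisted terms still have $|\cos|$ bounded away from $1$) or not (so $|1-2\pi i \beta X|^{-1/k}$ itself supplies the saving). You instead split on whether $|S_k^*(q,a)|$ is close to $\varphi(q)$, using the second-moment identity $\varphi(q)^2-|S_k^*|^2=2\sum\sin^2(\pi a(\ell_1^k-\ell_2^k)/q)$ and the equal-fiber count for the homomorphism $\ell\mapsto\ell^k$ to show that the near-extremal case forces $\ell^k\equiv1\pmod q$ for all units, and only then do a phase optimization. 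Both proofs work; yours isolates the true extremal configuration (and your remark that odd $k$ renders that branch vacuous is a nice bonus), while the paper's avoids the optimization entirely at the cost of an extra case split. Two spots in yours need tightening to be fully rigorous: first, in the extremal case you should note that $\cos(k\psi)^{1/k}|\cos(\theta+\psi)|$ vanishes at $\psi=\pm\pi/(2k)$, so the supremum is attained at an interior critical point, and that among the critical points $\psi_m=(m\pi-\theta)/(k+1)$ the largest value indeed comes from the $m$ realizing $d$; second, the $O(\epsilon^4)$ in your Taylor step is not harmless as written, since $\epsilon=k\pi/(q(k+1))$ need not be small (e.g.\ $q=3$), but replacing it by the explicit bounds $(\cos\epsilon)^{(k+1)/k}\le\cos\epsilon\le1-\tfrac25\epsilon^2$ on $[0,\pi/2]$ turns your final inequality into $16k^2\ge 5(k+1)^2$, which still holds for all $k\ge2$.
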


\begin{proof}
 If $q>(2C_k)^k$, then by Proposition \ref{Uniform S bound prop} we have $|S_k^* (q,a)|\le \frac12 \varphi(q)$.  Thus we restrict our attention to $q\le (2C_k)^k$. For notational convenience, let $C=  (2C_k)^k$.   
 
  As in the proof of Lemma \ref{major arc estimate lemma}, we let $\phi$ be the real number satisfying $-\frac12<\phi< \frac12$ and 
\begin{equation*}
\frac{1 - 2\pi i \beta X}{|1 - 2\pi i \beta X|} = e^{2\pi i\phi}.
\end{equation*}
We also let $\Delta:= |1 - 2\pi i \beta X| = \sqrt{1+ 4\pi^2|\beta|^2 X^2}$.

We have 
\begin{equation*}
\frac{ S_k^* (q,a)}{(1 - 2\pi i \beta X)^{1/k}} = \Delta^{-1/k} e(-\phi/k)  \sum_{\substack{\ell = 1\\ (\ell, q) = 1}}^{q} e\left(\frac{a\ell^k}{q}\right) =  \Delta^{-1/k} \sum_{\substack{\ell = 1\\ (\ell, q) = 1}}^{q} e\left(\frac{a\ell^k}{q}-\frac{\phi}{k}\right).
\end{equation*}
Since $(a,q) = 1$, we have $(a\ell^k, q)$= 1.  Let $\Vert x\Vert$ denote the distance to the nearest integer.  We have
\begin{equation*}
\left\Vert \frac{a\ell^k}{q}\right\Vert \ge \frac1q \ge \frac1C \quad\text{ and }\quad
\left\Vert \frac{a\ell^k}{q} - \frac12 \right\Vert \ge \frac{1}{2q} \ge \frac{1}{2C}.
\end{equation*}
It follows that each term $e(\frac{a\ell^k}{q})$ is on the unit circle with argument bounded away from $0$ and $\pi$.  The main idea is to show that the terms of $S_k^*(q,a)$ stay bounded away from $\pm 1$ when multiplied by $e(-\phi/k)\Delta^{-1/k}$.
  
 If $|\phi| < \frac{k}{4C}$, then $e(-i\phi)$ doesn't twist the terms of $S_k^*(q,a)$ much and so arguments are still bounded away from $0$ and $\pi$.  More precisely, 
\begin{equation*}
\left|\Re \left(e\left(\frac{a\ell^k}{q}-\frac{\phi}{k}\right)\right)\right| = \left|\cos\left(2\pi\left(\frac{a\ell^k}{q}-\frac{\phi}{k}\right)\right) \right| < \cos\left( \frac{\pi}{2C}\right) < 1 - \frac{2}{5} \left( \frac{\pi}{2C}\right)^2,
\end{equation*}
since $\frac25 \le (1-\cos\omega)\omega^{-2} < \frac12$ for $0<\omega<\pi/2$.
Hence 
\begin{equation*}
\left|\Re \left(\frac{ S_k^* (q,a)}{(1 - 2\pi i \beta X)^{1/k}}\right)\right| < \Delta^{-1/k} \left(1- \frac{\pi^2}{10C^2}\right) \varphi(q) < \left(1- \frac{\pi^2}{10C^2}\right) \varphi(q)
\end{equation*}

Now consider $|\phi| \ge  \frac{k}{4C}$.  Since $2\pi \phi = \arg(1-i2\pi\beta X)$, we have that 
\begin{equation*}
2\pi |\beta| X = \tan |2\pi\phi| \ge \tan\left(\frac{\pi k }{2C}\right).
\end{equation*}
So
\begin{equation*}
\Delta = \left|1- i (2\pi \beta X)\right| \ge \left|1 - i\tan\left(\frac{\pi k }{2C}\right)\right| = \sqrt{1 + \tan^2\left(\frac{\pi k }{2C}\right)}.
\end{equation*}
By the binomial theorem and the fact that $\tan^2 y>y^2$ for $|y|\le \pi/2$, we thus have
$$\Delta^{-1/k} \le \left(1 + \tan^2\left(\frac{\pi k }{2C}\right)\right)^{-\frac{1}{2k}}\le 1 - \frac{1}{2k}\left(\frac{\pi k }{2C}\right)^2.$$
So for $|\phi| \ge \frac{k}{4C}$,
\begin{equation*}
\left|\Re \left(\frac{ S_k^* (q,a)}{(1 - 2\pi i \beta X)^{1/k}}\right)\right| \le  \Delta^{-1/k} \left|S_k^* (q,a)\right| \le \left( 1 - \frac{\pi^2 k }{8C^2}\right)\varphi(q).
\end{equation*}
In either case,  
\begin{equation*}
\left|\Re \left(\frac{ S_k^* (q,a)}{(1 - 2\pi i \beta X)^{1/k}}\right)\right| \le \left( 1 - \frac{\pi^2}{8C^2}\right)\varphi(q) = \left( 1 - \delta_k\right)\varphi(q).
\end{equation*}
\end{proof}

We are now ready to complete the analysis of the non-principal major arcs.  
\begin{lemma} \label{major arc real part lemma} Suppose that $A$ is a positive real number, and that $X$ satisfies $X>X_0(A)$. Suppose also that $\alpha, \beta \in \mathbb{R}$, $q \in \mathbb N$, and $a\in\mathbb Z$ are such that $(a,q) = 1,$ $1 < q\le (\log X)^A$, $\beta = \alpha-\frac{a}{q}$, $|\beta| \le q^{-1} X^{-1} (\log X)^A$.  Then
\begin{equation*}
\left|\Re \left(\Phi(\rho e(\alpha))\right)\right| \le  \left(1-\frac{\delta_k}{3} \right)\Phi(\rho) \left(1+ O\left(\frac{\log\log X}{\log X}\right)\right),
\end{equation*}
where $\delta_k$ is the constant defined in Lemma  \ref{major arc real part with delta}.
\end{lemma}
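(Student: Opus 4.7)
The plan is to invoke Lemma \ref{major arc estimate lemma} to express $\Phi(\rho e(\alpha))$ as a weighted sum of rescaled exponential sums, and then to bound the real part term by term using Lemma \ref{major arc real part with delta}. The main complication is the set of indices $j$ with $q\mid j$, for which $q_j = q/(q,j) = 1$ and Lemma \ref{major arc real part with delta} does not apply; these terms must be isolated and handled with the trivial bound $|(1-2\pi i\beta X)^{-1/k}|\le 1$.

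To start, Lemma \ref{major arc estimate lemma} yields
\begin{equation*}
\Phi(\rho e(\alpha)) = \frac{X^{1/k}\Gamma(1/k)}{(1-2\pi i\beta X)^{1/k}\log X}\sum_{j\le \sqrt X}\frac{S_k^*(q_j,a_j)}{j^{1+1/k}\varphi(q_j)} + O\left(\frac{X^{1/k}\log\log X}{(\log X)^2}\right).
\end{equation*}
Taking real parts and applying $|\Re(\sum c_j)|\le \sum |\Re(c_j)|$, I split according to whether $q\mid j$ or not. For $q\nmid j$ one has $q_j>1$, so Lemma \ref{major arc real part with delta} gives
\begin{equation*}
\left|\Re\left(\frac{S_k^*(q_j,a_j)}{(1-2\pi i\beta X)^{1/k}}\right)\right|\le(1-\delta_k)\varphi(q_j),
\end{equation*}
contributing $(1-\delta_k)j^{-1-1/k}$ to the $j$-sum. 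For $q\mid j$ one has $q_j=1$, $S_k^*(1,a_j)=1$, $\varphi(1)=1$, and the trivial bound leaves a contribution of $j^{-1-1/k}$.

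Let $T=\sum_{j\le\sqrt X}j^{-1-1/k}$ and $T_q=\sum_{j\le\sqrt X,\,q\mid j}j^{-1-1/k}=q^{-1-1/k}\sum_{m\le\sqrt X/q}m^{-1-1/k}$. The combined bracket is $(1-\delta_k)(T-T_q)+T_q=T-\delta_k(T-T_q)$. Since $q\ge 2$, we have $T_q\le q^{-1-1/k}\zeta(1+1/k)\le \tfrac12\zeta(1+1/k)$, while $T=\zeta(1+1/k)+O(X^{-1/(2k)})$; hence $T-T_q\ge \tfrac12\zeta(1+1/k)+O(X^{-1/(2k)})$, so the bracket is at most $\zeta(1+1/k)(1-\delta_k/2)+O(X^{-1/(2k)})$. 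Multiplying by $X^{1/k}\Gamma(1/k)/\log X$ and invoking the $m=0$ case of Lemma \ref{rho d/drho lemma}, which identifies the leading constant as that of $\Phi(\rho)(1+O(1/\log X))$, absorbs the Mellin error $O(X^{1/k}\log\log X/(\log X)^2)$ into $O(\log\log X/\log X)\cdot\Phi(\rho)$ and yields
\begin{equation*}
|\Re(\Phi(\rho e(\alpha)))|\le \left(1-\frac{\delta_k}{2}\right)\Phi(\rho)\left(1+O\left(\frac{\log\log X}{\log X}\right)\right),
\end{equation*}
which is strictly stronger than the claimed $1-\delta_k/3$.

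The main obstacle is precisely the treatment of the $q\mid j$ indices, where the exponential sum degenerates and no cancellation from Lemma \ref{major arc real part with delta} is available. The saving in the final inequality hinges on the elementary but essential observation that, since $q\ge 2$, the weight $T_q$ of these bad indices is at most half of $T$, so at least half of the $\delta_k$ cancellation from the good indices is preserved; the slack between $\delta_k/2$ and $\delta_k/3$ then comfortably accommodates the small $O(\log\log X/\log X)$ perturbations coming from Lemmas \ref{major arc estimate lemma} and \ref{rho d/drho lemma}.
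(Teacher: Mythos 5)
There is a genuine gap. Lemma \ref{major arc real part with delta} is stated (and is only true) for moduli strictly greater than $2$, not merely greater than $1$: for $q_j=2$ one has $S_k^*(2,a_j)=-1=-\varphi(2)$, and at $\beta=0$ the real part of $S_k^*(2,a_j)/(1-2\pi i\beta X)^{1/k}$ has absolute value exactly $\varphi(2)$, so no $(1-\delta_k)$ saving per term is available. Your split into $q\mid j$ versus $q\nmid j$ therefore misclassifies as ``good'' all indices $j$ with $q_j=2$, i.e.\ those $j$ with $(q,j)=q/2$ when $q$ is even. For $q>2$ this is repairable: enlarge the bad set to $\{j: q\mid 2j\}$, whose weight $\sum_{q\mid 2j} j^{-1-1/k}\le (q/2)^{-1-1/k}\zeta(1+\frac1k)\le 2^{-1-1/k}\zeta(1+\frac1k)<\frac12\zeta(1+\frac1k)$, after which your density argument goes through; this is exactly the decomposition the paper uses.

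The case $q=2$, however, cannot be rescued by your method, and you do not treat it separately. There every $j$ has $q_j\in\{1,2\}$, so no index is ``good,'' and the termwise bound $|\Re(\sum c_j)|\le\sum|c_j|$ returns the full $\zeta(1+\frac1k)$ with no saving at all. The paper instead keeps the signs: for $q=2$ one has $S_k^*(q_j,a_j)/\varphi(q_j)=(-1)^j$, and the alternating sum satisfies $\bigl|\sum_{j\le\sqrt X}(-1)^j j^{-1-1/k}\bigr|\le\sum_{j \text{ odd}} j^{-1-1/k}=(1-2^{-1-1/k})\zeta(1+\frac1k)$, which is smaller than $(1-\delta_k/3)\zeta(1+\frac1k)$ since $\delta_k$ is tiny. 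Aside from these two points, your architecture --- Lemma \ref{major arc estimate lemma}, termwise real parts, a density count of bad indices, and identification of the leading constant with $\Phi(\rho)$ via Lemma \ref{rho d/drho lemma} --- matches the paper's.
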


\begin{proof}
 By Lemma \ref{major arc estimate lemma}, we have
\begin{equation}\label{real part starting point}
\Re\left(\Phi(\rho e(\alpha)) \right)=\frac{X^{1/k}\Gamma(1/k)}{\log X}\Re\left(\frac{1}{(1 - 2\pi i \beta X)^{1/k}} \sum_{j \le\sqrt{X}}\frac{S_k^*(q_j,a_j)}{j^{\frac{k+1}{k}}\varphi(q_j)}\right) +  O\left(\frac{X^{1/k}\log\log X}{(\log X)^2}\right)
\end{equation}

We first handle the case $q=2$.  Then $q_j = 1,2$ depending on whether $j$ is even or odd, respectively. So $S_k^*(q_j, a_j) = (-1)^j \varphi(q_j)$.  Hence
\begin{align*}
\left| \sum_{j \le\sqrt{X}}\frac{S_k^*(q_j,a_j)}{j^{\frac{k+1}{k}}\varphi(q_j)}\right| & =  \left|\sum_{j \le\sqrt{X}}\frac{(-1)^j}{j^{\frac{k+1}{k}}} \right| \le \sum_{\substack{j \le\sqrt{X}\\j \text{ odd}}}\frac{1}{j^{\frac{k+1}{k}}} \\ &
\le \left(1- 2^{-\frac{k+1}{k}}\right)\zeta \left(\frac{k+1}{k}\right) < \left(1- \frac{\delta_k}{3}\right)\zeta \left(\frac{k+1}{k}\right) .
\end{align*}

Now consider $q>2.$  
 If $q\nmid 2j$,  then $q_j>2$ and Lemma \ref{major arc real part with delta} tells us that 
 $$\left|\Re\left(\frac{S_k^*(q_j,a_j)}{(1 - 2\pi i \beta X)^{1/k}}\right) \right| \le \left(1-\delta_k\right) \varphi(q_j).$$
  Thus
 \begin{align*}
 &\left| \Re\left(\frac{1}{(1 - 2\pi i \beta X)^{1/k}} \sum_{j \le\sqrt{X}}\frac{S_k^*(q_j,a_j)}{j^{\frac{k+1}{k}}\varphi(q_j)}\right)\right| 
\le \sum_{\substack{j\le \sqrt{X}\\ q\mid 2j}}\frac{1}{j^{\frac{k+1}{k}}}  +  \left(1-\delta_k\right) \sum_{\substack{j\le \sqrt{X}\\ q\, \nmid\, 2j}}\frac{1}{j^{\frac{k+1}{k}}} \\ &
\le  \left(\frac{q}{2}\right)^{-\frac{k+1}{k}}\zeta\left(\frac{k+1}{k}\right) + \left(1-\delta_k \right)\left(1-\left(\frac{q}{2}\right)^{-\frac{k+1}{k}}\right) \zeta\left(\frac{k+1}{k}\right) \\ & 
\le  \left(1-\frac{\delta_k}{3} \right)\zeta\left(\frac{k+1}{k}\right) .
 \end{align*} 
 
Putting this into \eqref{real part starting point}, we have (for any $q\ge 2$),
 \begin{align*}\left| \Re\left(\Phi(\rho e(\alpha)) \right)\right| & \le \frac{ \left(1-\frac{\delta_k}{3} \right)  \zeta\left(\frac{k+1}{k}\right) \Gamma(1/k)X^{1/k}}{\log X} +  O\left(\frac{X^{1/k}\log\log X}{(\log X)^2}\right) \\ & \le
  \left(1-\frac{\delta_k}{3} \right)\Phi(\rho) \left(1+ O\left(\frac{\log\log X}{\log X}\right)\right).
 \end{align*}

\end{proof}

\section{Proof of Theorem \ref{main result full asymp}: The main term}\label{end of proof sec}
In this section, we conclude the proof of Theorem \ref{main result full asymp} by analyzing the contribution from $\mathfrak M(1,0)$.  
Recall that $p_{\mathbb P_k}(n)$ is given by the integral in \eqref{Cauchy integral}.  
For $\alpha\not\in  \mathfrak{M}(1,0)$ and $n$ sufficiently large, we have
\begin{equation*}
\left|\Re\left(\Phi(\rho e(\alpha))\right)\right| \le \left(1-\frac{\delta_k}{3}\right) \Phi(\rho),
\end{equation*}  
by Lemma \ref{minor arcs lemma} and Lemma \ref{major arc real part lemma}.  Thus 
\begin{equation}\label{integral principal major arc}
p_{\mathbb P_k}(n) = \rho^{-n} \int_{\mathfrak M(1,0)} \exp\left(\Phi(\rho e(\alpha)) \right)e(-n\alpha) \, d\alpha + O\left(\rho^{-n} \Psi(\rho) n^{-B}\right),
\end{equation}
for any constant $B$.  

If $\alpha \in \mathfrak{M}(1,0)$ and $|\alpha| > X^{-1} (\log X)^{-1/4}$, then by Lemma \ref{major arc estimate lemma} we have
\begin{equation*}
\left|\Phi(\rho e(\alpha))\right| =  \frac{X^{1/k}\Gamma(1/k)\zeta(1+1/k)}{(1 + 4\pi^2 \alpha^2 X^2)^{1/2k}\log X} +  O\left(\frac{X^{1/k}\log\log X}{(\log X)^2}\right),
\end{equation*}
and by the binomial theorem
\begin{equation*}
\left(1 + 4\pi^2 \alpha^2 X^2\right)^{-1/2k}< \left(1-\frac{2\pi^2}{k} \alpha^2 X^2\right) < 1-\frac{2\pi^2}{k} (\log X)^{-1/2}.
\end{equation*}
Thus for $X$ sufficiently large, we have
\begin{equation*}
\left|\Re\Phi(\rho e(\alpha))\right| < \left(1-(\log X)^{-1}\right)\Phi(\rho).
\end{equation*}
The contribution to the integral in \eqref{integral principal major arc} from such $\alpha$ is then
\begin{equation}\label{reduce to tau}
\le  \rho^{-n} \Psi(\rho) \exp\left(-\frac{\Phi(\rho)}{\log X}\right)\ll \rho^{-n} \Psi(\rho) n^{-B}.
\end{equation}

We are left to study the integral
\begin{equation} \label{main term tau interval}
\rho^{-n} \int_{-\tau}^{\tau} \exp(\Phi(\rho e(\alpha))) e(-n\alpha)\, d\alpha,
\end{equation}
where
\begin{equation*} \tau = X^{-1}(\log X)^{-1/4}.
\end{equation*}
For $\alpha\in\mathbb{R}$, let $R(\alpha)$ and $I(\alpha)$ denote the real and imaginary parts of $\Phi(\rho e(\alpha))$, respectively.  Applying Taylor's theorem to each of $R(\alpha)$ and $I(\alpha)$, we have 
\begin{align}\label{taylor expansion Phi}
\Phi(\rho e(\alpha)) & = R(\alpha)+ i I(\alpha) \\ & \nonumber
= (R(0) +i I(0)) + \alpha (R'(0) +iI'(0))+ \frac{\alpha^2}{2}  (R''(0) +iI''(0)) \\ & \nonumber
+ \frac{\alpha^3}{6} (R'''(c_R \alpha) +i I'''(c_I \alpha)),
\end{align}
where $0<c_R, c_I <1$ may depend on $\alpha$. 

Now, for any real $\beta$ we have
\begin{equation*}
R'(\beta) + i I'(\beta) = \frac{d}{d\beta} \Phi(\rho e(\beta))  = 2\pi i e(\beta) \rho \Phi'(\rho e(\beta)),
\end{equation*}
\begin{equation*}
R''(\beta) + i I''(\beta) = -4\pi^2  e(\beta) \rho \Phi'(\rho e(\beta))  - 4\pi^2  e(2\beta) \rho^2 \Phi''(\rho e(\beta)),
\end{equation*}
and
\begin{align*}
R'''(\beta) + i I'''(\beta) & = -8\pi^3 i e(\beta) \rho \Phi'(\rho e(\beta)) - 24\pi^3 i e(2\beta) \rho^2 \Phi''(\rho e(\beta)) \\ &
- 8\pi^3 i e(3\beta) \rho^3 \Phi'''(\rho e(\beta)).
\end{align*}
Thus 
\begin{equation*}
\sup\left(|R'''(\beta)|, |I'''(\beta)|\right) \le 8\pi^3\left( \rho \Phi'(\rho) +3\rho^2 \Phi''(\rho) +  \rho^3 \Phi'''(\rho)\right).
\end{equation*}
Hence, there exists $w\in\mathbb C$ (depending on $\alpha$) such that $|w|\le 1$ and  
\begin{equation*}
\alpha^3 \left(R'''(c_R\alpha) + i I'''(c_I\alpha)\right) = 16\pi^3w|\alpha|^3 \left( \rho \Phi'(\rho) +3\rho^2 \Phi''(\rho) +  \rho^3 \Phi'''(\rho)\right).
\end{equation*}
Putting this into \eqref{taylor expansion Phi}, we have
\begin{align*}
\Phi(\rho e(\alpha)) & =  \Phi(\rho ) + 2\pi i \alpha\rho \Phi'(\rho) 
 - 2\pi^2\alpha^2 ( \rho \Phi'(\rho) + \rho^2 \Phi''(\rho))  \\ &
+\frac{8}{3}\pi^3w|\alpha|^3 \left( \rho \Phi'(\rho) +3\rho^2 \Phi''(\rho) +  \rho^3 \Phi'''(\rho)\right).
\end{align*}
Recalling that $n= \rho \Phi'(\rho)$, the integral in \eqref{main term tau interval} becomes 
\begin{equation*}
\rho^{-n} \int_{-\tau}^{\tau}\exp(g(\rho, \alpha))\, d\alpha,
\end{equation*}
where
\begin{equation}\label{exponential integrand}
g(\rho, \alpha) =  \Phi(\rho ) - 2\pi^2\alpha^2 ( \rho \Phi'(\rho) + \rho^2 \Phi''(\rho))  
+\frac{8}{3}\pi^3w|\alpha|^3 \left( \rho \Phi'(\rho) +3\rho^2 \Phi''(\rho) +  \rho^3 \Phi'''(\rho)\right).
\end{equation}

The next step is to further narrow the interval of integration contributing to the main term.  We do this by showing that the real part of $g(\rho,\alpha)$ is strictly less than $\Phi(\rho)$ when $\alpha$ is not too small.  By Lemma \ref{rho d/drho lemma}, 
\begin{equation*}
\rho \Phi'(\rho) + \rho^2 \Phi''(\rho) \gg X^{2+\frac1k}(\log X)^{-1},
\end{equation*}
and 
\begin{equation*}
 \rho \Phi'(\rho) +3\rho^2 \Phi''(\rho) +  \rho^3 \Phi'''(\rho) \ll X^{3+\frac1k}(\log X)^{-1}.
\end{equation*}
Thus there exist positive constants $C_1, C_2$, such that for $X$ sufficiently large and  $|\alpha| \le \tau = X^{-1}(\log X)^{-1/4}$ we have
\begin{align*}
& \left|\frac{8}{3}\pi^3w|\alpha|^3 \left( \rho \Phi'(\rho) +3\rho^2 \Phi''(\rho) +  \rho^3 \Phi'''(\rho)\right)\right|  \\ &
\le C_1 \alpha^2 X^{2+\frac1k}(\log X)^{-5/4} \le C_2 \alpha^2 X^{2+\frac1k}(\log X)^{-1} \\ & 
\le \pi^2 \alpha^2 ( \rho \Phi'(\rho) + \rho^2 \Phi''(\rho)).
\end{align*}
So, 
\begin{equation*}
|\Re( g(\rho, \alpha))| \le  \Phi(\rho ) - \pi^2\alpha^2 ( \rho \Phi'(\rho) + \rho^2 \Phi''(\rho)).
\end{equation*}
For $|\alpha| \ge  X^{-(1+\frac{1}{2k})} (\log X)^{2}$, there is a positive constant $C_3$ such that
 \begin{equation*}
|\Re( g(\rho, \alpha))| \le  \Phi(\rho ) - C_3 (\log X)^3.
\end{equation*}
Hence the contribution to the integral in \eqref{integral principal major arc} from these $\alpha$ is
 \begin{equation}\label{reduce to eta}
\le  \rho^{-n} \Psi(\rho) X^{-C_3(\log X)^2}\ll \rho^{-n} \Psi(\rho) n^{-B}.
\end{equation}

Finally, we need to estimate the integral over the interval $[-\eta, \eta]$, where $\eta = X^{-(1+\frac{1}{2k})} (\log X)^{2}.$
For $\alpha$ in this interval, we have
\begin{align*}
&|\alpha|^3( \pi^3\rho \Phi'(\rho) + 3\pi^3\rho^2 \Phi''(\rho) +  \pi^3 \rho^3 \Phi'''(\rho) )\\ &
\ll (\log X)^6 X^{-(3+\frac3{2k})} (\log X)^{-1} X^{3+\frac1k} =  (\log X)^5 X^{-\frac{1}{2k}}.
\end{align*}
Recall that $n = x \asymp X^{\frac{k+1}{k}}(\log X)^{-1}$.
So,
\begin{align*}
 X^{\frac{1}{2k}} (\log X)^{-5}& =   \left(X^{\frac{k+1}{k}}\right)^{\frac{1}{2(k+1)} }(\log X)^{-5}
\\ & \gg n^{\frac{1}{2(k+1)}}(\log X)^{-5 +\frac{1}{2(k+1)}}  \gg n^{\frac{1}{2(k+1)}+\varepsilon}
\\ & \gg n^{\frac{1}{2k+3}}.
\end{align*}
Hence 
\begin{equation*}
|\alpha|^3( \pi^3\rho \Phi'(\rho) + 3\pi^3\rho^2 \Phi''(\rho) +  \pi^3 \rho^3 \Phi'''(\rho))
\ll n^{-\frac{1}{2k+3}},
\end{equation*}
and therefore 
\begin{equation*}
\exp\left(\frac{8}{3}\pi^3w|\alpha|^3 \left( \rho \Phi'(\rho) +3\rho^2 \Phi''(\rho) +  \rho^3 \Phi'''(\rho)\right)\right) = 1+ O(n^{-\frac{1}{2k+3}})
\end{equation*}
Putting this into \eqref{exponential integrand}, and noticing that $ \Phi_{(2)}(\rho) =  \rho \Phi'(\rho) + \rho^2 \Phi''(\rho)$, we thus have
\begin{align*}
& \int_{-\eta}^{\eta} \exp(\Phi(\rho e(\alpha))) e(-n\alpha)\, d\alpha =  \int_{-\eta}^{\eta} \exp(g(\rho, \alpha))\, d\alpha
\\ & = (1+ O(n^{-\frac{1}{2k+3}})) \Psi(\rho) \int_{-\eta}^{\eta} \exp(-\alpha^2 2\pi^2 \Phi_{(2)}(\rho) )\, d\alpha.  
\end{align*}
Recall that $\eta  \Phi_{(2)}(\rho) \gg (\log X)^3$.  Through a routine polar coordinates integration, we have
\begin{align*}
  \left( \int_{-\eta}^{\eta} \exp(-\alpha^2 2\pi^2 \Phi_{(2)}(\rho) )\, d\alpha \right)^2 &
= \frac{1}{2\pi \Phi_{(2)}(\rho) } \left(1-\exp(-\eta^2 2\pi^2  \Phi_{(2)}(\rho) )\right)\\ &
 = \frac{1}{2\pi \Phi_{(2)}(\rho) } \left(1+O\left(e^{-(\log X)^2}\right )\right) .
\end{align*}
Therefore
\begin{equation*}
 \int_{-\eta}^{\eta} \exp(\Phi(\rho e(\alpha))) e(-n\alpha)\, d\alpha 
  = \frac{\Psi(\rho)}{\sqrt{2\pi\Phi_{(2)}(\rho)}} \left(1+ O(n^{-\frac{1}{2k+3}})\right)  \left(1+O\left(e^{-(\log X)^2}\right )\right).
\end{equation*}
Combining this with \eqref{integral principal major arc}, \eqref{reduce to tau}, and \eqref{reduce to eta} gives the desired result.

\section{Proof of Theorem \ref{difference theorem}} \label{difference sec}
Let $\rho = \rho(n)$ and let $X$ satisfy $\rho = e^{-1/X}$.  By \eqref{Cauchy integral}, we have
$$p_{\mathbb P_k} (n+1) - p_{\mathbb P_k} (n) = \int_{-1/2}^{1/2}  \rho^{-n} \exp(\Phi (\rho e(\alpha))-2\pi i n \alpha)(\rho^{-1}e^{-2\pi i \alpha} -1)\, d\alpha.$$
Note that $|\rho^{-1}e^{-2\pi i \alpha} -1| \le e^{1/X}+1\le 4$.  From the proof of Theorem \ref{main result full asymp}, we see that the contribution from $|\alpha|> \eta = X^{-(1+\frac{1}{2k})} (\log X)^{2}$ is 
$$\ll   \frac{\rho^{-n} \Psi(\rho)}{\sqrt{2\pi \Phi_{(2)}(\rho)}} n^{-B}$$
for any positive constant $B$.  On the other hand, when $|\alpha|\le\eta$, we have
$$\rho^{-1}e^{-2\pi i \alpha} -1 = \exp\left(\frac{1}{X} - 2\pi i \alpha\right) - 1 = \frac{1}{X} +O(\eta) = \frac1X + O\left(X^{-(1+\frac{1}{2k})} (\log X)^{2}\right).$$
From the proof of Theorem \ref{main result full asymp}, we have
\begin{equation*}\int_{-\eta}^{\eta} \rho^{-n} \exp(\Phi (\rho e(\alpha))-2\pi i n \alpha)\, d\alpha 
  = \frac{\rho^{-n}\Psi(\rho)}{\sqrt{2\pi\Phi_{(2)}(\rho)}} (1+ O(n^{-\frac{1}{2k+3}})).
\end{equation*}
Recalling \eqref{X in terms of x}, we thus have
\begin{align*}
p_{\mathbb P_k} (n+1) - p_{\mathbb P_k} (n) & =  \frac{\rho^{-n}\Psi(\rho)}{\sqrt{2\pi\Phi_{(2)}(\rho)}} (1+ O(n^{-\frac{1}{2k+3}}))\left( \frac1X + O\left(X^{-(1+\frac{1}{2k})} (\log X)^{2}\right)\right)\\ &
 =  \frac{\rho^{-n}\log(\frac{1}{\rho})\Psi(\rho)}{\sqrt{2\pi\Phi_{(2)}(\rho)}} (1+ O(n^{-\frac{1}{2k+3}})),
\end{align*}
as desired. \hfill$\square$

\section{Analysis of $S_k^*(q,a)$}\label{appendix}

In Section \ref{major arcs sec}, we introduced the exponential sum $S_k^*(q,a)$.  Here we provide some analysis of these exponential sums.  For any $a,q\in \mathbb{N}$, we define
$$S_k(q,a)  =\sum_{\ell = 1}^q e\left(\frac{a\ell^k}{q}\right), \quad S_k^*(q,a)  =\sum_{\substack{\ell = 1\\ (\ell,q)=1}}^q e\left(\frac{a\ell^k}{q}\right).$$
Note that
$$ S_k^*(q,a) = \sum_{\nu\mid q}  \mu(\nu)S_k\left(\frac{q}{\nu},a\nu^{k-1}\right).$$

We begin by showing that $S_k^*(q,a)$ exhibits a certain multiplicative behavior.  We then show that  $S_k^*(p^\ell,a) = 0$ for large powers $\ell$.  Finally, we obain an effective upper bound for $ S_k^*(q,a)$, which is provided by Proposition \ref{Uniform S bound prop}.  The arguments here are similar to the treatment of  $S_k(q,a)$ in \cite[Ch. 4]{Vaughan1997}.

\begin{lemma}\label{S multiplicative}
 If $(q,r) = (a,r)  = (a,q) = 1$, then 
 $$S_k^*(qr,a) =  S_k^*(q,ar^{k-1}) S_k^*(r,aq^{k-1}).$$
\end{lemma}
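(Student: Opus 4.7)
The plan is to prove the factorization via an explicit Chinese Remainder Theorem parameterization of the reduced residues modulo $qr$.

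First I would fix Bezout integers $u,v$ with $qu + rv = 1$, so that $v \equiv r^{-1} \pmod q$ and $u \equiv q^{-1} \pmod r$. Then I would use the standard CRT bijection: as $\ell_1$ ranges over a reduced residue system modulo $q$ and $\ell_2$ ranges over a reduced residue system modulo $r$, the integer
$$\ell := r\ell_1 + q\ell_2$$
ranges (in some order) over a reduced residue system modulo $qr$. Coprimality is automatic: $\ell \equiv r\ell_1 \pmod q$ with $(r\ell_1,q) = 1$, and similarly modulo $r$.

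Next I would compute $\ell^k \pmod{qr}$ by reducing separately modulo $q$ and modulo $r$. Since the binomial expansion of $(r\ell_1 + q\ell_2)^k$ has every cross term divisible by $qr$, we get $\ell^k \equiv r^k \ell_1^k \pmod q$ and $\ell^k \equiv q^k \ell_2^k \pmod r$. The key algebraic step is then the decomposition
$$\frac{a}{qr} \;=\; \frac{av}{q} + \frac{au}{r},$$
which follows immediately from $qu+rv=1$. Substituting and reducing modulo $1$ in each piece, the factors $v$ and $u$ get absorbed as inverses: $vr^k \equiv r^{k-1} \pmod q$ and $uq^k \equiv q^{k-1} \pmod r$. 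Therefore
$$e\!\left(\frac{a\ell^k}{qr}\right) \;=\; e\!\left(\frac{ar^{k-1}\ell_1^k}{q}\right) e\!\left(\frac{aq^{k-1}\ell_2^k}{r}\right).$$

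Finally I would sum this identity over the CRT bijection. Since the right-hand side is a product of a function of $\ell_1$ alone with a function of $\ell_2$ alone, the double sum factors as
$$S_k^*(qr,a) \;=\; \Bigl(\!\!\sum_{\substack{\ell_1=1\\(\ell_1,q)=1}}^q e\!\left(\tfrac{ar^{k-1}\ell_1^k}{q}\right)\!\Bigr)\Bigl(\!\!\sum_{\substack{\ell_2=1\\(\ell_2,r)=1}}^r e\!\left(\tfrac{aq^{k-1}\ell_2^k}{r}\right)\!\Bigr) \;=\; S_k^*(q,ar^{k-1})\,S_k^*(r,aq^{k-1}),$$
as claimed. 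There is no real obstacle here — the work is bookkeeping with CRT and Bezout. The hypotheses $(a,q)=(a,r)=1$ are not strictly needed for the identity itself to hold (only $(q,r)=1$ is essential to run CRT), but they ensure that $(ar^{k-1},q)=1$ and $(aq^{k-1},r)=1$, so that the factors on the right remain genuinely ``primitive'' sums in the sense the later application requires.
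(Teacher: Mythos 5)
Your proof is correct, and it takes a genuinely different (and arguably more direct) route than the paper's. You parameterize the reduced residues modulo $qr$ by the CRT map $\ell = r\ell_1 + q\ell_2$ and split the phase via the Bezout identity $\tfrac{a}{qr} = \tfrac{av}{q} + \tfrac{au}{r}$, after which each summand factors and the sum splits immediately. The paper instead first converts each $S_k^*$ into complete sums via the M\"obius relation $S_k^*(q,a) = \sum_{\nu\mid q}\mu(\nu)\,S_k(q/\nu,\,a\nu^{k-1})$, multiplies out the resulting double sum over divisor pairs $(\nu,\eta)$, and applies CRT to the \emph{complete} residue systems, using the congruence $(r\nu m + q\eta\ell)^k \equiv (r\nu m)^k + (q\eta\ell)^k \pmod{qr}$ in place of a Bezout decomposition; the divisors then recombine as $\lambda=\nu\eta$ to reconstitute $S_k^*(qr,a)$. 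Your argument is self-contained and avoids the detour through $S_k$, which is the cleaner choice in isolation; the paper's version has the mild structural advantage of running everything through the $S_k^*$-to-$S_k$ identity that is stated at the top of that section and reused in the subsequent vanishing lemma for $S_k^*(p^\ell,a)$. Your closing remark that only $(q,r)=1$ is needed for the identity itself is correct (the same is true of the paper's proof), and the coprimality of $a$ indeed matters only for the later applications, where the bound $|S_k^*(q,a)|\le C_k q^{-1/k}\varphi(q)$ is invoked for reduced fractions $a/q$.
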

\begin{proof}
We have 
\begin{align}
S_k^*(q,ar^{k-1}) S_k^*(r,aq^{k-1}) & = \sum_{\nu\mid q}  \sum_{\eta\mid r} \mu(\nu\eta) S_k\left(\frac{q}{\nu},ar^{k-1}\nu^{k-1}\right) S_k\left(\frac{r}{\eta},aq^{k-1}\eta^{k-1}\right) \nonumber \\ & 
=  \sum_{\nu\mid q}  \sum_{\eta\mid r} \mu(\nu\eta) \sum_{m\le \frac{q}{\nu}} \sum_{\ell\le \frac{r}{\eta}} e\left(a\frac{(r\nu m)^k + (q\eta\ell)^k}{qr}\right). \label{stop for euclid}
\end{align}
By Euclid's algorithm, for each residue class $h \pmod{\frac{qr}{\nu\eta}}$, there exists a unique pair $(m,\ell)$ with $m\le \frac{q}{\nu}$ and $\ell\le \frac{r}{\eta}$ such that 
$$h = \frac{r}{\eta} m + \frac{q}{\nu}\ell.$$
Let $\lambda = \nu\eta$.  Then $h\lambda = r\nu m + q\eta\ell$, and \eqref{stop for euclid} is equal to
\begin{equation*}
 \sum_{\lambda\mid qr}   \mu(\lambda) \sum_{h\le \frac{qr}{\lambda}} e\left(\frac{a(h\lambda)^k}{qr}\right) = S_k^*(qr,a).
\end{equation*}
\end{proof}

\begin{lemma} \label{S often zero}
For each prime $p$, define $\tau = \tau(p)$ so that $p^\tau || k$, and define 
$$\gamma = \gamma(p) = \begin{cases} \tau + 2, & \text{if } p=2 \text{ and } \tau > 0 \\
\tau + 1, & \text{otherwise.}\end{cases}$$
Then $S_k^*(p^\ell, a) = 0$ whenever $\ell>\gamma$ and $p\nmid a$.
\end{lemma}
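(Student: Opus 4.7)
\medskip

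\noindent\textbf{Proof plan.}  The plan is a standard additive-shift argument: I partition the summation range of $S_k^*(p^\ell,a)$ into cosets modulo an appropriate power $p^s$, factor out the contribution of the coset representative, and show that the remaining inner sum is a complete geometric sum that vanishes because the relevant linear coefficient has $p$-adic valuation strictly less than $\ell-s$.

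Specifically, I will take $s = \ell - \tau - 1$ (so that $\ell - s = \tau+1$, which is $\gamma$ when $p$ is odd or $p=2,\tau=0$, and $\gamma-1$ when $p=2,\tau\ge 1$).  Since $\ell > \gamma \ge \tau+1$, we have $s\ge 1$, so every element $m$ of $(\mathbb Z/p^\ell\mathbb Z)^*$ admits a unique decomposition $m = u + v p^s$ with $u$ ranging over reduced residues mod $p^s$ and $v$ ranging over all residues mod $p^{\ell-s}$.  Expanding $(u+vp^s)^k$ by the binomial theorem, I will show that every term with $j\ge 2$ satisfies $v_p\!\bigl(\binom{k}{j}\bigr) + js \ge \ell$, so that modulo $p^\ell$ only the $j=0,1$ terms survive:
\begin{equation*}
(u+vp^s)^k \equiv u^k + ku^{k-1}vp^s \pmod{p^\ell}.
\end{equation*}
Plugging this into $S_k^*(p^\ell,a)$ and separating the $u$- and $v$-sums, the inner sum over $v$ becomes
\begin{equation*}
\sum_{v=0}^{p^{\ell-s}-1} e\!\left(\frac{aku^{k-1}v}{p^{\ell-s}}\right),
\end{equation*}
which is a geometric sum equal to $p^{\ell-s}$ if $p^{\ell-s}\mid aku^{k-1}$ and $0$ otherwise.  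Since $(a,p)=(u,p)=1$, we have $v_p(aku^{k-1}) = \tau < \tau+1 = \ell-s$, so the inner sum vanishes and thus $S_k^*(p^\ell,a) = 0$.

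The main obstacle is verifying the binomial bound $v_p\!\bigl(\binom{k}{j}\bigr) + js \ge \ell$ for $j\ge 2$, which is tight and forces the peculiar definition of $\gamma$.  For $j\ge 3$ I will use the identity $\binom{k}{j} = \frac{k}{j}\binom{k-1}{j-1}$ to obtain the standard bound $v_p\!\bigl(\binom{k}{j}\bigr) \ge \tau - v_p(j)$, which reduces the required inequality to $\ell \ge \tau + \frac{j+v_p(j)}{j-1}$; the maximum of the right side over $j\ge 3$ is at most $\tau+2$, which is dominated by $\ell>\gamma$ in every case.  For $j=2$ the generic bound is not sharp enough when $p=2$, so I will compute $v_p\!\bigl(\binom{k}{2}\bigr) = v_p(k(k-1)) - v_p(2)$ directly: for odd $p$ with $\tau\ge 1$ this equals $\tau$, for $p=2$ with $\tau=0$ it equals $v_2(k-1)-1\ge 0$, and for $p=2$ with $\tau\ge 1$ it equals $\tau-1$.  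A short case check shows that the resulting inequality becomes exactly $\ell \ge \gamma+1$, matching the threshold in the hypothesis and thereby confirming that the definition of $\gamma$, with its $p=2$ anomaly, is precisely what the argument requires.
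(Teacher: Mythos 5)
Your argument is correct, but it takes a genuinely different route from the paper. The paper's proof is a two-line reduction: it writes $S_k^*(p^\ell,a)=S_k(p^\ell,a)-S_k(p^{\ell-1},ap^{k-1})$ (the $\nu=1$ and $\nu=p$ terms of the M\"obius identity relating $S_k^*$ to $S_k$) and then invokes Lemma 4.4 of Vaughan's book, which under the same hypothesis $\ell>\gamma$ gives $S_k(p^\ell,a)=p^{\ell-1}$ for $\ell\le k$ and $S_k(p^\ell,a)=p^{k-1}S_k(p^{\ell-k},a)$ for $\ell>k$; a short computation shows that $S_k(p^{\ell-1},ap^{k-1})$ equals the same quantity in each case, so the difference vanishes. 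You instead prove the vanishing from scratch by a coset ($p$-adic stationary phase) argument: split the reduced residues mod $p^\ell$ as $u+vp^s$ with $s=\ell-\tau-1$, kill the $j\ge 2$ binomial terms modulo $p^\ell$, and observe that the resulting inner linear sum over $v$ vanishes because $v_p\bigl(aku^{k-1}\bigr)=\tau<\ell-s$. Your valuation bookkeeping checks out: the $j\ge 3$ terms are handled by $v_p\bigl(\binom{k}{j}\bigr)\ge\tau-v_p(j)$ together with $v_p(j)\le j-2$, and the $j=2$, $p=2$, $\tau\ge 1$ case is exactly tight, reproducing the threshold $\ell\ge\tau+3=\gamma+1$ and thereby explaining the anomalous definition of $\gamma$ at $p=2$. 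What each approach buys: the paper's route is shorter but outsources the real work to the cited lemma (whose proof is essentially your argument applied to the complete sum $S_k$); yours is self-contained and makes visible why $\gamma$ has the shape it does.
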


It is useful to note that $\gamma\le k$ unless $p=k=2$, in which case $\gamma =3$.

\begin{proof}
We have
\begin{equation*}
 S_k^*(p^\ell,a) =  S_k(p^\ell,a)  - S_k(p^{\ell-1},ap^{k-1}).
 \end{equation*}
If $\ell \le k$, then $S_k(p^\ell,a) = p^{\ell-1}$ by Lemma 4.4 of \cite{Vaughan1997} and 
$$S_k\left(p^{\ell-1},ap^{k-1}\right) = \sum_{m\le p^{\ell-1}} e\left(\frac{ap^km^k}{p^\ell}\right) = p^{\ell-1}.$$
If $\ell > k$ then $S_k(p^\ell,a) = p^{k-1} S_k(p^{\ell-k},a)$ by Lemma 4.4 of \cite{Vaughan1997} and
 $$S_k\left(p^{\ell-1},ap^{k-1}\right) = \sum_{m\le p^{\ell-1}} e\left(\frac{am^k}{p^{\ell-k}}\right) =  p^{k-1}\sum_{r\le p^{\ell-k}} e\left(\frac{ar^k}{p^{\ell-k}}\right)= p^{k-1}S_k\left(p^{\ell-k},a\right).$$
\end{proof} 

\begin{proposition}\label{Uniform S bound prop}
For all $q,a\in \mathbb N$ with $(q,a)= 1$, we have
$$ |S_2^*(q,a)| \le 8 q^{-1/4}\varphi(q),$$
and for $k>2$
$$|S_k^*(q,a)|\le C_k q^{-\frac1k} \varphi(q),$$
where
$$C_k =  \prod_{p\le k^6} k$$
\end{proposition}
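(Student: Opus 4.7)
The plan is to combine the multiplicativity from Lemma \ref{S multiplicative} with the vanishing property of Lemma \ref{S often zero} to reduce the problem to a product of prime-power sums, and then to bound each factor by either a trivial estimate (for small primes) or Weil's exponential-sum bound (for large primes). Iterating Lemma \ref{S multiplicative} along a factorization $q = \prod_i p_i^{\ell_i}$ produces
\[
|S_k^*(q, a)| = \prod_i |S_k^*(p_i^{\ell_i}, a_i)|
\]
for suitable residues $a_i$ with $(a_i, p_i) = 1$; by Lemma \ref{S often zero} the product vanishes unless every exponent satisfies $\ell_i \le \gamma(p_i)$, so we may assume this from the outset.

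Next, I would split $q = q_1 q_2$, where $q_1$ collects the prime-power factors with $p \le k^6$ and $q_2$ collects those with $p > k^6$. For each factor of $q_1$ apply the trivial estimate $|S_k^*(p^\ell, a_i)| \le \varphi(p^\ell)$, yielding $|S_k^*(q_1, \cdot)| \le \varphi(q_1)$. Since $\gamma(p)$ is bounded in terms of $k$, the integer $q_1$ itself is bounded by a constant depending only on $k$, and the loss $q_1^{1/k}$ is absorbed into the constant $C_k = \prod_{p \le k^6} k$. For each prime $p \mid q_2$ we have $p > k$ and therefore $\gamma(p) = 1$, forcing the exponent to be $1$; I would then invoke Weil's bound $|S_k(p, a_i)| \le (k-1)\sqrt{p}$ (cf.\ \cite{Vaughan1997}) to obtain $|S_k^*(p, a_i)| \le (k-1)\sqrt{p} + 1$, and verify by a direct calculation that $((k-1)\sqrt{p} + 1)/(p-1) \le p^{-1/k}$ whenever $p > k^6$ and $k \ge 3$.

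Multiplying the small- and large-prime estimates gives
\[
\frac{|S_k^*(q,a)|}{\varphi(q)} \le q_2^{-1/k} = q^{-1/k} q_1^{1/k} \le C_k q^{-1/k},
\]
which is the desired bound. For $k = 2$, the Gauss-sum evaluation $|S_2(p,a)| = \sqrt{p}$ (for odd primes $p \nmid a$) plays the role of Weil's bound, and combining it with an explicit computation of $S_2^*(2^\ell, a)$ for $\ell \le \gamma(2) = 3$ produces the sharper constant $8$ together with the exponent $-1/4$.

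The hard part will be the large-prime inequality $((k-1)\sqrt{p} + 1)/(p-1) \le p^{-1/k}$: for $k \ge 4$ the threshold $p > k^6$ leaves a large margin, but for $k = 3$ the threshold $k^6 = 729$ is essentially the critical value, so one must check carefully that the inequality survives in this borderline case. Beyond this bookkeeping, the proof is a direct combination of the previous two lemmas with a standard exponential-sum estimate.
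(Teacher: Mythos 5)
Your overall architecture---multiplicativity via Lemma \ref{S multiplicative}, Lemma \ref{S often zero} to force $\ell\le\gamma(p)$, trivial bounds on small primes, Weil's bound on large primes---is the same as the paper's, and your $k=2$ sketch matches the paper's treatment. However, there is a genuine gap in where you place the small/large cutoff when $k>2$. You apply the trivial bound $|S_k^*(p^\ell,a_i)|\le\varphi(p^\ell)$ to \emph{every} prime $p\le k^6$ and then assert that $q_1^{1/k}\le C_k=\prod_{p\le k^6}k$. Since $C_k$ allots exactly one factor of $k$ per prime $p\le k^6$, the per-prime requirement is $p^{\ell/k}\le k$, i.e.\ $p^\ell\le k^k$. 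This holds for $p\le k$ (where $p^\ell\le p^{\gamma(p)}\le kp^{2}\le k^k$), but for $k\in\{3,4,5\}$ the interval $(k^k,k^6]$ contains primes, and for those the trivial bound exceeds the budgeted $kp^{-1/k}\varphi(p)$. The overshoot does not average out against the slack from other primes: taking $q_1=\prod_{p\le k^6}p^{\gamma(p)}$ with $k=3$ gives $q_1^{1/3}\ge e^{\theta(729)/3}\approx e^{231}$, whereas $C_3=3^{\pi(729)}=3^{129}\approx e^{142}$, so your closing inequality $q_1^{1/k}\le C_k$ is false and the argument does not close. (For $k\ge6$ one has $k^6\le k^k$ and your version does go through.)

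The repair is what the paper does: reserve the trivial bound for $p\le k$ only, and apply the Weil-type estimate $|S_k^*(p,a_i)|=|S_k(p,a_i)-1|\le(k-1)\sqrt p+1$ to every prime $p>k$ (where necessarily $\ell=1$). Bounding this by $k\sqrt p\le kp^{-1/k}\,(p-1)/(p^{1/6}-p^{-5/6})$ for $k\ge3$ yields $|S_k^*(p,a_i)|\le kp^{-1/k}(p-1)$ for $k<p\le k^6$ and $\le p^{-1/k}(p-1)$ for $p>k^6$, and the product of these per-prime bounds is exactly $C_kq^{-1/k}\varphi(q)$. Your remark that the case $k=3$, $p$ just above $729$ is borderline is on target: there the crude comparison $k\sqrt p$ versus $p^{-1/k}(p-1)$ is essentially tight (it fails by a hair at $p=733$), so one must retain the sharper $(k-1)\sqrt p+1$. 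But the more serious defect in the proposal is the misplaced cutoff described above, not this boundary check.
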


\begin{proof}
Write $q= \prod p^\ell.$  Then by Lemma \ref{S multiplicative}, we have
$$S_k^*(q,a) = \prod_{p\mid q} S_k^*\left(p^\ell, a(qp^{-\ell})^{k-1}\right).$$

We first consider $k=2$.  If $p^2\mid q$ for $p\ge3$, or if $16|q$, then $S_2^*(q,a) = 0$ by Lemma \ref{S often zero}.  So we may write $q= 2^\ell b$, where $0\le \ell\le 3$ and $b$ is odd, squarefree.  

It is easy to see that $|S_2^*(2^\ell,a)| = 2^{\ell-1}$ for $1\le\ell\le 3$. For odd primes $p$
\begin{align*}
|S_2^*(p,a)|^2 & = \sum_{x=1}^{p-1}\sum_{y=1}^p e\left(\frac{a(y^2-x^2)}{p}\right) -  \sum_{x=1}^{p-1}e\left(\frac{-ax^2}{p}\right) \\ &
= \sum_{x=1}^{p-1}\sum_{h=1}^p e\left(\frac{a(2x+h)h}{p}\right)  - S_2^*(p,-a) \\ &
= \sum_{h=1}^p e\left(\frac{ah^2}{p}\right) \sum_{x=1}^{p-1}e\left(\frac{2ahx}{p}\right)  - S_2^*(p,-a )\\ &
= \sum_{h=1}^{p-1} (-1)e\left(\frac{ah^2}{p}\right) + (p-1)  - S_2^*(p,-a) \\ &
= (p-1) -S_2^*(p,a) - S_2^*(p,-a).
\end{align*}
Thus 
\begin{equation*}
|S_2^*(p,a)| \le \sqrt p + 1 = p^{-\frac14} (p-1) \left(\frac{ p^{\frac14}}{\sqrt{p}-1}\right) < \begin{cases} p^{-\frac14} (p-1), & p\ge 7 \\ 2p^{-\frac14} (p-1), & p=3,5.\end{cases}
\end{equation*}
All together, 
$$|S_k^*(q,a)| \le  2^{\ell-1} 4 b^{-\frac14} \varphi(b) \le 8 q^{-\frac14}\varphi(q).$$

Now let $k>2$.  We consider $S_k^*(p^\ell, a)$.  If $\ell>\gamma$ then $S_k^*(p^\ell, a) = 0$.  So we may assume $\ell\le \gamma(p) \le k$.
If $p\le k$, then 
\begin{equation*}
|S_k^*(p^\ell,a)| \le  \varphi(p^{\ell})  = kp^{-1}\varphi(p^{\ell}) \le  k p^{-\ell/k} \varphi(p^{\ell}) .
\end{equation*}
If $p>k$, then $\gamma=1$ so $\ell =1$.  By Lemma 4.3 of \cite{Vaughan1997}, we have
\begin{align*}
|S_k^*(p,a)| & = |S_k^*(p,a) -1| \le (k-1)p^{1/2} + 1 \le kp^{1/2} \le k p^{-1/k}p^{5/6} \\ &
 = kp^{-1/k} \frac{p-1}{p^{1/6} - p^{-5/6}} \le \begin{cases} p^{-1/k}(p-1), & p>k^6 \\ kp^{-1/k}(p-1), & p\le k^6.\end{cases}
\end{align*}
\end{proof}

\bibliographystyle{amsplain}

\end{document}